\newtheorem{thm}{Theorem}[section]
\theoremstyle{plain}
\newtheorem{prop}[thm]{Proposition}
\newtheorem{cor}[thm]{Corollary}
\theoremstyle{definition}
\newtheorem{defn}[thm]{Definition}
\theoremstyle{remark}
\newtheorem{rem}[thm]{Remark}
\newtheorem{eg}[thm]{Example}
\newcommand{\RNum}[1]{\MakeUppercase{\romannumeral #1}}
\newcommand{\ol}{\overline}
\renewcommand{\phi}{\varphi}
\newcommand{\llb}{\left\lbrace}
\newcommand{\rrb}{\right\rbrace}
\newcommand{\llv}{\left\lvert}
\newcommand{\rrv}{\right\rvert}
\newcommand{\llangle}{\left\langle}
\newcommand{\rrangle}{\right\rangle}
\title{Reflexive homology}
\author{Daniel Graves}
\address{School of Mathematics, University of Leeds, Woodhouse, Leeds, LS2 9JT, UK}
\email{dan.graves92@gmail.com}
\date{}
\begin{document}

\keywords{reflexive homology, functor homology, involution, crossed simplicial group, involutive Hochschild homology, equivariant homology, free loop space}
\subjclass{55N35, 16E40, 55P35}

\maketitle

\begin{abstract}
Reflexive homology is the homology theory associated to the reflexive crossed simplicial group; one of the fundamental crossed simplicial groups. It is the most general way to extend Hochschild homology to detect an order-reversing involution. In this paper we study the relationship between reflexive homology and the $C_2$-equivariant homology of free loop spaces. We define reflexive homology in terms of functor homology. We give a bicomplex for computing reflexive homology together with some calculations, including the reflexive homology of a tensor algebra. We prove that the reflexive homology of a group algebra is isomorphic to the homology of the $C_2$-equivariant Borel construction on the free loop space of the classifying space. We give a direct sum decomposition of the reflexive homology of a group algebra indexed by conjugacy classes of group elements, where the summands are defined in terms of a reflexive analogue of group homology. We define a hyperhomology version of reflexive homology and use it to study the $C_2$-equivariant homology of certain free loop and free loop-suspension spaces. We show that reflexive homology satisfies Morita invariance. We prove that under nice conditions the involutive Hochschild homology studied by Braun and by Fern\`andez-Val\`encia and Giansiracusa coincides with reflexive homology. 
\end{abstract}

\section*{Introduction}
Reflexive homology is the homology theory associated to the reflexive crossed simplicial group. It is the most general way to extend Hochschild homology to detect an order-reversing involution.

The study of involutive structures in algebraic topology has been very fruitful in recent years with the development of \emph{real topological Hochschild homology} (\cite{DMP},\cite{DMPR}, \cite{Hogenhaven1}, \cite{Hogenhaven2}, \cite{AKGH}, \cite{DMP2}, \cite{dotto-thesis}), \emph{real algebraic $K$-theory} (\cite{HM}, \cite{DO}, \cite{Dotto-equivariant}) and a renaissance in the study of \emph{Hermitian $K$-theory} (\cite{HKT1}, \cite{HKT2}, \cite{HKT3}).

In this paper we study the relationship between reflexive homology and the $C_2$-equivariant homology of free loop spaces. The study of free loop spaces occurs widely in topology and geometry. In topology they play an important role in string topology (\cite{CS}) and topological Hochschild and cyclic homology (\cite{BCS}, \cite{NS}). See also \cite{CHV} for more on both of these topics. In geometry, the free loop space is intimately connected with the study of closed geodesics on manifolds (\cite{LF}, \cite{GM}). See also \cite{Oancea} for  a survey of results in this area.

Crossed simplicial groups were introduced independently by Fiedorowicz and Loday \cite{FL} and Krasauskas \cite{Kras-skew} in order to study equivariant homology. One way of thinking about a crossed simplicial group is as the structure required to build group actions into Hochschild homology in the same fashion as the cyclic homology theory due to Connes \cite{Con} (see also \cite{Lod}). Since their introduction, crossed simplicial groups have been well-studied and have found applications in other areas such as the categorification of monoids in symmetric and braided monoidal categories (\cite{Pir-PROP}, \cite{Lack}, \cite{DMG-IFAS}, \cite{DMG-PROB}, \cite{DMG-equiv}) and combinatorial models for marked surfaces with a $G$-structure (\cite{DK}).

A classification result (\cite[3.6]{FL}, \cite[1.5]{Kras-skew}) tells us that any crossed simplicial group occurs as an extension of a \emph{fundamental} crossed simplicial group. These fundamental crossed simplicial groups are subobjects of the hyperoctahedral crossed simplicial group \cite[Section 3]{FL}:  trivial; reflexive; cyclic; dihedral; symmetric; and hyperoctahedral. Most of these crossed simplicial groups have been well-studied. Whilst the associated homology theories have a range of interesting applications, for the purposes of this introduction we will restrict ourselves to results about loop spaces.

The homology theories associated to the trivial crossed simplicial group and the cyclic crossed simplicial group are Hochschild homology and cyclic homology respectively. These homology theories, when applied to the group algebra of a discrete group, calculate the homology and the $S^1$-equivariant homology of the free loop space on the classifying space of the group respectively \cite[7.3.13]{Lod}. Indeed, the connections between the cyclic homology theory and free loop spaces is well-established (\cite{DB-cyc}, \cite{Goodwillie}, \cite{Burg-Fie}, \cite{Jones}, \cite{VP-B}, \cite{CC}). The dihedral homology theory (\cite{Lod-dihed}, \cite{KLS-dihed}) is used to calculate the $O(2)$-equivariant homology of free loop spaces (\cite{Lodder1}, \cite{Dunn}, \cite{Ung}).

In spite of the fact that crossed simplicial groups and their associated homology theories are well-studied, the reflexive crossed simplicial group appears only ephemerally in the literature; usually only being considered insofar as it relates to the dihedral crossed simplicial group. For example, Krasauskas, Lapin and Solov'ev \cite[Section 3]{KLS-dihed} give a definition of reflexive homology in terms of hyperhomology in order to obtain a dihedral version of Connes' periodicity long exact sequence. Spali\'nski \cite[Section 3]{spal-discrete} shows that the category of reflexive sets admits a model structure that is Quillen equivalent to the category of $C_2$-spaces with the fixed-point model structure and uses this as a tool for giving a discrete model of $O(2)$-equivariant homotopy theory, which also arises from the dihedral crossed simplicial group.

In this paper we study the reflexive homology theory in its own right. The term ``reflexive" for this crossed simplicial group was first used in \cite[Proposition 1.5]{Kras-skew}. As the homology theory associated to a crossed simplicial group, reflexive homology is defined as functor homology over a small category which we will denote $\Delta R^{op}$. The structure of the indexing category $\Delta R^{op}$ can be thought of as encoding an order-reversing involution compatible with a unital, associative multiplication. In this sense, the reflexive homology theory offers the most general framework for extending Hochschild homology to detect the action of an involution.

The paper is structured as follows.

In Section \ref{ref-hom-sec} we define reflexive homology in terms of functor homology and use this to define the reflexive homology of an involutive algebra over a commutative ring. We provide the necessary background material on functor homology and Hochschild homology and survey the results that currently exist in the literature.

In Section \ref{bires-sec} we define a bicomplex that can be used to calculate reflexive homology. We use this to show that our functor homology definition coincides with the hyperhomology definition of \cite{KLS-dihed}. We also show that when working over a field of characteristic zero, reflexive homology can be calculated using the quotient of the Hochschild complex by the involution action.

In Section \ref{ref-hyperhom-sec} we define reflexive hyperhomology for chain complexes of left $\Delta R^{op}$-modules in terms of hyper-derived functors and recall some important examples.

In Section \ref{morita-sec} we prove that reflexive homology satisfies Morita invariance. Explicitly, we show that for an involutive algebra $A$, the reflexive homology of the involutive algebra of $(m\times m)$-matrices with entries in $A$ is isomorphic to the reflexive homology of $A$.

In Section \ref{calc-sec} we provide some computations. We calculate the reflexive homology of the ground ring and describe degree zero reflexive homology for commutative algebras with involution.

In Section \ref{tensor-alg-sec} we calculate the reflexive homology of a tensor algebra. We show that this can be described in terms of the group homology of $C_2$ with coefficients in the Hochschild homology of the tensor algebra, as calculated by Loday and Quillen \cite{LQ}. As a consequence, we show that the reflexive homology of a tensor algebra has a grading.

In Section \ref{kg-sec} we calculate the reflexive homology of a group algebra. We prove that this is isomorphic to the homology of the $C_2$-equivariant Borel construction on the free loop space of the classifying space of the group. By combining this with the cyclic homology of a group algebra \cite[7.3.13]{Lod}, we show that the dihedral homology of a group algebra is isomorphic to the homology of the $O(2)$-equivariant Borel construction on the free loop space of the classifying space of the group. Furthermore, we give a direct sum decomposition of the reflexive homology of a group algebra, indexed over the conjugacy classes of the group, where the summands are given in terms of a reflexive analogue of group homology.

In Section \ref{Moore-loop-sec} we use our reflexive hyperhomology to prove that we can calculate the $C_2$-equivariant homology of certain free loop spaces and free loop-suspension spaces in terms of the singular chain complex on certain Moore loop spaces.

Other constructions that build an involution into Hochschild homology exist in the literature. Braun \cite{Braun} introduced \emph{involutive Hochschild homology} in order to study involutive algebras and involutive $A_{\infty}$-algebras. The homological algebra of this theory was developed by Fern\`andez-Val\`encia and Giansiracusa \cite{RFG}. In particular, they show that under nice conditions involutive Hochschild homology can be described as Tor over an involutive analogue of the enveloping algebra. In Section \ref{ihh-sec} we show that under these conditions there is an isomorphism between involutive Hochschild homology and reflexive homology.

In Section \ref{THR-section}, we provide some exposition on how the structure of the reflexive crossed simplicial group appears in the study of \emph{real topological Hochschild homology}. In particular, we observe that \emph{real simplicial objects} in a category are precisely the same as reflexive objects in a category. An important example of this is the dihedral nerve construction, which plays an important role in defining real topological Hochschild homology.

\section*{Acknowledgements}
I would like to thank James Brotherston, James Cranch, Callum Reader and Sarah Whitehouse for interesting and helpful conversations at various stages of writing this paper. I would like to thank the referee for their helpful comments and suggestions.

\section*{Conventions}
Throughout the paper we will let $k$ be a commutative ring. An unadorned tensor product symbol, $\otimes$, will denote the tensor product of $k$-modules. The category of $k$-modules will be denoted by $\mathbf{Mod}_k$. We will denote by $\mathbf{Top}$ the category of compactly-generated weak Hausdorff topological spaces. Let $\mathbf{Top}_{\star}$ denote the category of based compactly-generated weak Hausdorff topological spaces. We will usually refer to a ``topological space" or a ``based topological space". When referring to a weak equivalence of topological spaces we mean a $\pi_{\star}$-isomorphism. Several of our results relate to free loop spaces so we will introduce notation for this here. Let $\mathcal{L}$ denote the functor $\mathrm{Maps}\left(S^1, -\right)$, which sends a based topological space $X$ to the space of unbased continuous maps $S^1\rightarrow X$.

\section{Reflexive homology}
\label{ref-hom-sec}

In this section we recall the definition of the reflexive crossed simplicial group and define its associated homology theory. 

\subsection{Functor homology and Hochschild homology}
We start by recalling some constructions from functor homology, using Hochschild homology as an example.

For a small category $\mathbf{C}$ there are abelian categories $\mathbf{CMod}=\mathrm{Fun}\left(\mathbf{C}, \mathbf{Mod}_k\right)$ and $\mathbf{ModC}=\mathrm{Fun}\left(\mathbf{C}^{op}, \mathbf{Mod}_k\right)$. There is a tensor product
\[-\otimes_{\mathbf{C}} - \colon \mathbf{ModC} \times \mathbf{CMod} \rightarrow \mathbf{Mod}_k\]
defined as the coend
\[G\otimes_{\mathbf{C}} F = \int^{C\in \mathrm{Ob}(\mathbf{C})} G(C) \otimes_k F(C)\]
as in \cite[Section 3]{MCM}. It is well-known that this tensor product is right exact with respect to both variables and preserves direct sums \cite[Section 1.6]{Pir02}. The left derived functors of this tensor product are denoted by $\mathrm{Tor}_{\star}^{\mathbf{C}}\left(-,-\right)$. When $G=k^{\ast}$, the constant functor at $k$, we write $H_{\star}\left(\mathbf{C},F\right)= \mathrm{Tor}_{\star}^{\mathbf{C}}\left(k^{\ast}, F\right)$.

As an example, we can recover Hochschild homology of a simplicial $k$-module. Recall the category $\Delta$, whose objects are the sets $[n]=\llb 0,\dotsc , n\rrb$ for $n\geqslant 0$ and whose morphisms are order-preserving maps \cite[B.1]{Lod}. If we take $\mathbf{C}=\Delta^{op}$ and a simplicial $k$-module $F\colon \Delta^{op}\rightarrow \mathbf{Mod}_k$ we recover Hochschild homology. One example that we will be particularly interested in is studying $k$-algebras so we recall the \emph{Loday functor}.

Let $A$ be an associative $k$-algebra and let $M$ be an $A$-bimodule. There is a functor 
\[\mathcal{L}(A,M)\colon \Delta^{op}\rightarrow \mathbf{Mod}_k\]
given on objects by $[n]\mapsto M\otimes A^{\otimes n}$ and determined on morphisms by 
\[\partial_i\left(m\otimes a_1\otimes \cdots \otimes a_n\right) =
\begin{cases}
\left( ma_1\otimes a_2\otimes \cdots \otimes a_n\right) & i=0\\
\left(m\otimes a_1\otimes \cdots \otimes a_ia_{i+1}\otimes \cdots \otimes a_n\right) & 1\leqslant i \leqslant n-1\\
\left(a_nm\otimes a_1\otimes \cdots \otimes a_{n-1}\right) & i=n
\end{cases}
\]
and
\[s_j\left(m\otimes a_1\otimes \cdots \otimes a_n\right) = 
\begin{cases}
\left(m\otimes 1_A \otimes a_1\otimes \cdots \otimes a_n\right) &j=0\\
\left(m\otimes a_1\otimes \cdots\otimes a_j \otimes 1_A \otimes a_{j+1} \otimes \cdots \otimes a_n\right) & j\geqslant 1.
\end{cases}
\]

The chain complex associated to this simplicial $k$-module is the Hochschild complex $C_{\star}(A,M)$ and its homology is Hochschild homology, $HH_{\star}(A,M)$. In particular we have 
\[HH_{\star}(A,M)= H_{\star}\left(\Delta^{op} , \mathcal{L}(A,M)\right) = \mathrm{Tor}_{\star}^{\Delta^{op}}\left(k^{\ast}, \mathcal{L}\left(A,M\right)\right).\]

\subsection{Reflexive homology}
An important source of functor homology theories come from \emph{crossed simplicial groups}, introduced independently by Fiedorowicz and Loday \cite{FL} and Krasauskas \cite{Kras-skew}.

In this paper we will study the homology theory associated to the reflexive crossed simplicial group. 
\begin{defn}
Let $R_n=\llangle r_n\mid r_n^2=1\rrangle \cong C_2$ for $n\geqslant 0$.
\end{defn}

The family of groups $\llb R_{\star}\rrb$ forms a crossed simplicial group \cite[Example 2]{FL}, whose geometric realization is $C_2$.

\begin{defn}
\label{delta-r-defn}
The category $\Delta R$ has the sets $[n]=\llb 0,\dotsc , n\rrb$ for $n\geqslant 0$ as objects. An element of $\mathrm{Hom}_{\Delta R}\left([n],[m]\right)$ is a pair $\left(\phi , g\right)$ where $g\in R_n$ and $\phi \in \mathrm{Hom}_{\Delta}\left([n],[m]\right)$. The composition is determined as follows. For a face map $\delta_i\in \mathrm{Hom}_{\Delta}\left([n],[n+1]\right)$ we define $r_{n+1}\circ\delta_i=\delta_{n-i}\circ r_n$ and for a degeneracy map $\sigma_j\in \mathrm{Hom}_{\Delta}\left([n],[n-1]\right)$ we define $r_{n}\circ \sigma_j=\sigma_{n-j}\circ r_{n-1}$. 
\end{defn}

\begin{rem}
The category $\Delta R$ also appears in \cite[Section 1]{KLS-dihed} with the notation $\Delta \ltimes \mathbb{Z}/2$.
\end{rem}

\begin{defn}
\label{refhom-defn}
Let $F\in \mathrm{Fun}\left(\Delta R^{op},\mathbf{Mod}_k\right)$. We define the \emph{reflexive homology} of $F$ to be $HR_{\star}\left(F\right)=\mathrm{Tor}_{\star}^{\Delta R^{op}}\left(k^{\ast}, F\right)$, where $k^{\ast}\in \mathrm{Fun}\left(\Delta R, \mathbf{Mod}_k\right)$ is the constant functor at $k$.
\end{defn}

We can extend the Loday functor to a functor $\mathcal{L}(A,M)\colon \Delta R^{op}\rightarrow \mathbf{Mod}_k$ in two different ways but first we require some definitions.

\begin{defn}
A $k$-algebra is said to be \emph{involutive} if it is equipped with an anti-homomorphism of algebras $A\rightarrow A$ of order two, which we will denote by $a\mapsto \ol{a}$. In other words, we equip $A$ with a $k$-linear endomorphism which reverses the order of multiplication and squares to the identity.
\end{defn}

Following Loday \cite[5.2.1]{Lod} we recall the notion of an involutive $A$-bimodule.

\begin{defn}
Let $A$ be an involutive $k$-algebra. An \emph{involutive $A$-bimodule} is an $A$-bimodule $M$ equipped with a map $m\mapsto \ol{m}$ such that $\ol{a_1ma_2}= \ol{a_2}\, \ol{m}\, \ol{a_1}$ for $a_1$, $a_2\in A$.
\end{defn}

\begin{eg}
Taking $M=A$ gives one example of an involutive $A$-bimodule. If $A$ is equipped with an augmentation $\varepsilon\colon A\rightarrow k$ we can also take $M=k$ with the trivial involution.
\end{eg}

With these definitions we can extend the Loday functor to $\Delta R^{op}$ in two ways.

\begin{defn}
\label{loday-defn}
Let $A$ be an involutive $k$-algebra and let $M$ be an involutive $A$-bimodule. We extend the Loday functor $\mathcal{L}(A,M)$ to a functor 
\[\mathcal{L}^{+}\left(A,M\right)\colon \Delta R^{op}\rightarrow \mathbf{Mod}_k\] by defining
\[r_n\left(m\otimes a_1\otimes \cdots \otimes a_n\right) = \left(\ol{m}\otimes \ol{a_n}  \otimes \cdots \otimes \ol{a_1}\right).\]
\end{defn}

\begin{defn}
Let $A$ be an involutive $k$-algebra and let $M$ be an involutive $A$-bimodule. We define
\[HR_{\star}^{+}(A,M)\coloneqq \mathrm{Tor}_{\star}^{\Delta R^{op}}\left(k^{\ast},\mathcal{L}^{+}(A,M)\right).\]
\end{defn}

\begin{defn}
Let $A$ be an involutive $k$-algebra and let $M$ be an involutive $A$-bimodule. We extend the Loday functor $\mathcal{L}(A,M)$ to a functor 
\[\mathcal{L}^{-}\left(A,M\right)\colon \Delta R^{op}\rightarrow \mathbf{Mod}_k\] by defining
\[r_n\left(m\otimes a_1\otimes \cdots \otimes a_n\right) = -\left(\ol{m}\otimes \ol{a_n}  \otimes \cdots \otimes \ol{a_1}\right).\]
\end{defn}

\begin{defn}
Let $A$ be an involutive $k$-algebra and let $M$ be an involutive $A$-bimodule. We define
\[HR_{\star}^{-}(A,M)\coloneqq \mathrm{Tor}_{\star}^{\Delta R^{op}}\left(k^{\ast},\mathcal{L}^{-}(A,M)\right).\]
\end{defn}

\begin{rem}
When we take $M=A$ we will omit the coefficients from the notation and write $\mathcal{L}^{\pm}(A)$ and $HR_{\star}^{\pm}(A)$.
\end{rem}

\begin{rem}
\label{LES-rem}
These reflexive homology groups fit into long exact sequences with the dihedral homology groups by \cite[Proposition 3.1]{KLS-dihed}. There exist long exact sequences
\[\cdots \rightarrow HR_n^{+}(A)\rightarrow HD_n^{+}(A)\rightarrow HD_{n-2}^{-}(A)\rightarrow HR_{n-1}^{+}(A)\rightarrow \cdots\]
and
\[\cdots \rightarrow HR_n^{-}(A)\rightarrow HD_n^{-}(A)\rightarrow HD_{n-2}^{+}(A)\rightarrow HR_{n-1}^{-}(A)\rightarrow \cdots\]
where the dihedral homology groups $HD_{\star}^{+}(A)$ and $HD_{\star}^{-}(A)$ are defined in \cite[Section 1]{KLS-dihed}.
Furthermore, as discussed in \cite[5.2]{Lod}, if the ground ring $k$ contains $\mathbb{Q}$ then the direct sum of these long exact sequences is Connes' long exact sequence connecting Hochschild homology and cyclic homology \cite[Theorem 2.2.1]{Lod}.
\end{rem}

\begin{rem}
For an involutive $k$-algebra $A$, there is natural map $HR_{\star}^{+}(A) \rightarrow HO_{\star}(A)$, where $HO_{\star}$ denotes the hyperoctahedral homology theory of \cite{DMG-hyp} and \cite[Section 2]{Fie}. One can obtain this map by composing the map $HR_{\star}^{+}(A) \rightarrow HD_{\star}^{+}(A)$ from the previous remark with the map $HD_{\star}\left(A\right) \rightarrow HO_{\star}\left(A\right)$ of \cite[Subsection 3.3]{DMG-hyp}.
\end{rem}

\section{Biresolution}
\label{bires-sec}
We begin this section by defining a biresolution of the $k$-constant right $\Delta R^{op}$-module $k^{\ast}$. Fiedorowicz and Loday \cite[6.7]{FL} give a general construction for such a biresolution using the homogeneous bar resolution for a group \cite[\RNum{7}.6]{CWM}. The bicomplex defined here is smaller, making use of the periodic resolution of the group $C_2$. It is also worth noting that our bicomplex includes into the tricomplex of Krasauskas, Lapin and Solov'ev \cite[Section 2]{KLS-dihed} for computing dihedral homology.

\begin{defn}
\label{bires-defn}
We define a bicomplex $C_{\star,\star}$ of right $\Delta R^{op}$-modules as follows. Firstly we set
\[C_{p,q}=k\left[\mathrm{Hom}_{\Delta R}\left([q],-\right)\right]\]
for all $p,\,q\geqslant 0$.

The horizontal differential $d\colon C_{p,q}\rightarrow C_{p-1,q}$ for $q\geqslant 0$ and $p\geqslant 1$ is given by
\[d=\begin{cases}
1-r_q & q\equiv 0,\, 3\,(\mathrm{mod}\, 4), \, p\equiv 1\, (\mathrm{mod}\, 2)\\
1+r_q & q\equiv 1,\, 2\,(\mathrm{mod}\, 4), \, p\equiv 1\, (\mathrm{mod}\, 2)\\
1+r_q & q\equiv 0,\, 3\,(\mathrm{mod}\, 4), \, p\equiv 0\, (\mathrm{mod}\, 2)\\
1-r_q & q\equiv 1,\, 2\,(\mathrm{mod}\, 4), \, p\equiv 0\, (\mathrm{mod}\, 2)
\end{cases}
\]
where the map $r_q$ is defined by pre-composition.
  
The vertical differential $b\colon C_{p,q}\rightarrow C_{p,q-1}$ for $p\geqslant 0$ and $q\geqslant 1$ is given by
\[b=\sum_{i=0}^q (-1)^i \delta_i\]
where the maps $\delta_i$ are defined by pre-composition.
\end{defn}

\begin{prop}
\label{bires-prop}
The bicomplex $C_{\star,\star}$ of Definition \ref{bires-defn} is a biresolution of the $k$-constant right $\Delta R^{op}$-module $k^{\ast}$.
\end{prop}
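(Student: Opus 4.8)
The plan is to verify the two defining properties of a biresolution: that each entry $C_{p,q}$ is a projective right $\Delta R^{op}$-module, and that the total complex of $C_{\star,\star}$, augmented over $k^{\ast}$, is acyclic. The first property is immediate from the representable form of the entries: for any $G\in \mathrm{Fun}\left(\Delta R, \mathbf{Mod}_k\right)$ the Yoneda lemma gives $\mathrm{Hom}\left(k\left[\mathrm{Hom}_{\Delta R}\left([q],-\right)\right], G\right)\cong G([q])$, and since evaluation at $[q]$ is exact, each $C_{p,q}$ is projective. The real content is therefore exactness of the augmented total complex, and since exactness of a sequence of functors is detected objectwise, I would fix an object $[m]$ and work with the first-quadrant bicomplex of $k$-modules $C_{p,q}([m])=k\left[\mathrm{Hom}_{\Delta R}\left([q],[m]\right)\right]$.

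To compute the homology of the total complex of this bicomplex I would run the spectral sequence obtained by taking homology in the horizontal ($p$) direction first. The key observation is that precomposition by $r_q$ is a free involution on the set $\mathrm{Hom}_{\Delta R}\left([q],[m]\right)$: under the identification $\mathrm{Hom}_{\Delta R}\left([q],[m]\right)\cong \mathrm{Hom}_{\Delta}\left([q],[m]\right)\times R_q$ afforded by Definition \ref{delta-r-defn}, the element $r_q$ acts without fixed points on the $R_q$-factor. Hence $k\left[\mathrm{Hom}_{\Delta R}\left([q],[m]\right)\right]$ is a free $k[C_2]$-module, and for each fixed $q$ the horizontal row is exactly the periodic complex $\cdots \xrightarrow{1\mp r_q} k\left[\mathrm{Hom}_{\Delta R}\left([q],[m]\right)\right]\xrightarrow{1\pm r_q}\cdots$ computing $H_{\star}\left(C_2; k\left[\mathrm{Hom}_{\Delta R}\left([q],[m]\right)\right]\right)$; the role of the $q\bmod 4$ and $p\bmod 2$ bookkeeping in Definition \ref{bires-defn} is precisely to reproduce this alternation of $1\pm r_q$ while keeping $d$ and $b$ anticommuting. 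As the coefficient module is free, this homology is concentrated in degree zero, where it is the coinvariants $k\left[\mathrm{Hom}_{\Delta R}\left([q],[m]\right)\right]_{C_2}\cong k\left[\mathrm{Hom}_{\Delta}\left([q],[m]\right)\right]$.

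Consequently the $E^1$-page is concentrated in the single column $p=0$, so the spectral sequence collapses at $E^2$ and the homology of the total complex is computed by the complex $q\mapsto k\left[\mathrm{Hom}_{\Delta}\left([q],[m]\right)\right]$ equipped with the differential induced by $b=\sum_i (-1)^i \delta_i$. Here I would check that on coinvariants this induced differential is the usual simplicial boundary of the representable simplicial set $\Delta^m=\mathrm{Hom}_{\Delta}\left(-,[m]\right)$: each orbit has a unique representative with trivial $R_q$-component, and the crossed relations $r_{n+1}\circ\delta_i=\delta_{n-i}\circ r_n$ keep these representatives in the $\Delta$-part under the faces. The chain complex of $\Delta^m$ is contractible, so its homology is $k$ in degree zero and $0$ above; identifying the degree-zero class with the augmentation $\varepsilon$ then shows that the augmented total complex $C_{\star,\star}([m])\to k$ is acyclic for every $[m]$, which completes the proof.

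I expect the main obstacle to be the bookkeeping in the middle paragraph: verifying the freeness of the precomposition action and confirming that the sign pattern of Definition \ref{bires-defn} genuinely turns each row into the periodic $C_2$-resolution (and makes $C_{\star,\star}$ a bicomplex) requires unwinding the crossed simplicial composition law rather than the simplicial identities alone. Once that is settled, the degeneration of the spectral sequence and the contractibility of $\Delta^m$ are routine.
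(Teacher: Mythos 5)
Your proof is correct and takes essentially the same route as the paper: the paper likewise reduces to evaluation at each object $[n]$, observes that $C_{p,q}([n])$ is a free $k[C_2]$-module on the generating set $\mathrm{Hom}_{\Delta}\left([q],[n]\right)$, runs the horizontal-homology spectral sequence so that each row collapses to its coinvariants in the column $p=0$, and identifies the resulting column with the acyclic chain complex of the standard simplicial $n$-simplex. Your explicit Yoneda argument for projectivity of the entries and the sign bookkeeping for the rows are details the paper leaves implicit, but they do not change the argument.
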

\begin{proof}
It suffices to show that the bicomplex of $k$-modules obtained by evaluating $C_{\star,\star}$ on each object $[n]$ of $\Delta R^{op}$ is a resolution of $k$.

We fix an object $[n]$. We observe that $C_{p,q}([n])$ is a free $k[C_2]$-module on the set of generators $\mathrm{Hom}_{\Delta}\left([q],[n]\right)$ for each $p\geqslant 0$.

Consider the $E^1$-page of the spectral sequence obtained by taking the horizontal homology of the bicomplex $C_{\star,\star}([n])$. The rows are exact complexes of finitely-generated $k[C_2]$-modules. Therefore, the $E^1$-page is isomorphic to the complex 
\[k\left[\mathrm{Hom}_{\Delta}\left([\star],[n]\right)\right]\]
with differential $b$ concentrated in the column $p=0$. This is the complex associated to the standard simplicial model of the $n$-simplex, which is acyclic. We deduce that the $E^2$-page of the spectral sequence is isomorphic to a copy of $k$ concentrated in bidegree $(0,0)$ as required.
\end{proof} 

In the introduction we said that Krasauskas, Lapin and Solov'ev \cite[Section 3]{KLS-dihed} had given a definition of reflexive homology in terms of hyperhomology. We now demonstrate that the functor homology definition coincides with that definition.

\begin{prop}
\label{hyperhom-prop}
Let $F\in \mathrm{Fun}\left(\Delta R^{op},\mathrm{Mod}_k\right)$. The reflexive homology $HR_{\star}(F)$ is naturally isomorphic to the hyperhomology of the group $C_2$ with coefficients in the Hochschild complex $C_{\star}(F)$.
\end{prop}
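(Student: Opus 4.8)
The plan is to use the biresolution $C_{\star,\star}$ of Proposition \ref{bires-prop} as a small projective resolution of $k^{\ast}$ and to read off the Tor groups from it directly. First I would observe that each term $C_{p,q}=k\left[\mathrm{Hom}_{\Delta R}\left([q],-\right)\right]$ is a representable right $\Delta R^{op}$-module and hence projective in $\mathbf{ModC}$. Since the bicomplex lives in the first quadrant, every total degree of $\mathrm{Tot}\left(C_{\star,\star}\right)$ is a finite direct sum of such representables and is therefore projective; together with Proposition \ref{bires-prop} this shows that $\mathrm{Tot}\left(C_{\star,\star}\right)\rightarrow k^{\ast}$ is a projective resolution in $\mathbf{ModC}$. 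Consequently the reflexive homology $HR_{\star}(F)=\mathrm{Tor}_{\star}^{\Delta R^{op}}\left(k^{\ast},F\right)$ can be computed as the homology of the complex $\mathrm{Tot}\left(C_{\star,\star}\right)\otimes_{\Delta R^{op}}F$.

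Next I would evaluate the tensor product termwise. By the co-Yoneda (density) lemma, tensoring a representable with $F$ simply evaluates $F$: for each $p$, $q$ there is a natural isomorphism
\[C_{p,q}\otimes_{\Delta R^{op}}F = k\left[\mathrm{Hom}_{\Delta R}\left([q],-\right)\right]\otimes_{\Delta R^{op}}F\cong F\left([q]\right).\]
This identifies $C_{\star,\star}\otimes_{\Delta R^{op}}F$ with a first-quadrant bicomplex whose $(p,q)$-entry is $F([q])$, and I would then track the two differentials under the identification. The vertical differential $b=\sum_{i=0}^{q}(-1)^i\delta_i$ becomes the Hochschild boundary of $F$, so each column is the Hochschild complex $C_{\star}(F)$, on which the operators $r_{\star}$ induce a $C_2$-action making $C_{\star}(F)$ a complex of $k[C_2]$-modules. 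The horizontal differential, being $1\pm r_q$ in the pattern of Definition \ref{bires-defn}, becomes $1\pm F(r_q)$ on $F([q])$, which is precisely the differential of the standard two-periodic free resolution of $k$ over $k[C_2]$ applied to the $C_2$-module $F([q])$.

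Finally I would recognize the resulting total complex as the object defining hyperhomology. The hyperhomology of $C_2$ with coefficients in the complex of $k[C_2]$-modules $C_{\star}(F)$ is, by definition, the homology of $\mathrm{Tot}\left(P_{\star}\otimes_{k[C_2]}C_{\star}(F)\right)$, where $P_{\star}\rightarrow k$ is the periodic resolution of $k$ over $k[C_2]$; and $P_{\star}\otimes_{k[C_2]}C_q(F)$ is exactly $F([q])$ equipped with horizontal differential $1\pm r_q$. Hence $\mathrm{Tot}\left(C_{\star,\star}\right)\otimes_{\Delta R^{op}}F$ coincides with the defining bicomplex of this hyperhomology, and taking homology yields the claimed isomorphism. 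Because projectivity of representables, the density isomorphism, and the description of the $C_2$-action via $r_{\star}$ are all natural in $F$, the isomorphism is natural, as required.

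The step I expect to be the main obstacle is the sign bookkeeping: verifying that, after the co-Yoneda identification, the mod-$4$ and mod-$2$ case analysis of Definition \ref{bires-defn} reproduces the standard periodic resolution of $C_2$ with signs compatible with the total-complex conventions, so that the two bicomplexes agree on the nose rather than merely up to a reindexing or a sign twist. The remaining ingredients are standard and can be invoked without extended computation.
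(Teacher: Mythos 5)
Your proposal is correct and takes essentially the same route as the paper's proof: both identify the bicomplex $C_{\star,\star}\otimes_{\Delta R^{op}}F$ (via projectivity of the representables and the co-Yoneda isomorphism $C_{p,q}\otimes_{\Delta R^{op}}F\cong F([q])$) as computing $HR_{\star}(F)$ on one hand, and as the standard bicomplex for the $C_2$-hyperhomology of $C_{\star}(F)$ on the other. The sign verification you flag does go through: the $C_2$-action under which the Hochschild boundary is equivariant (i.e.\ the action making $C_{\star}(F)$ a complex of $k[C_2]$-modules) is $(-1)^{q(q+1)/2}F(r_q)$ in degree $q$, and this sign is $+1$ precisely when $q\equiv 0,3 \ (\mathrm{mod}\ 4)$ and $-1$ when $q\equiv 1,2\ (\mathrm{mod}\ 4)$, so the mod-$4$ case analysis in Definition \ref{bires-defn} reproduces, on the nose, the two-periodic resolution pattern $1-t,\,1+t,\,1-t,\dotsc$ for this signed action.
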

\begin{proof}
We observe that for $F\in \mathrm{Fun}\left(\Delta R^{op},\mathrm{Mod}_k\right)$, the Hochschild complex $C_{\star}(F)$ is a complex of $k[C_2]$-modules. Consider the bicomplex $C_{\star,\star}\otimes_{\Delta R^{op}} F$. On the one hand, by definition, the homology of this bicomplex is $HR_{\star}(F)$. On the other hand, it is a bicomplex of $k$-modules with the Hochschild complex $C_{\star}(F)$ in the column $p=0$ such that the homology of row $n$ for $n\geqslant 0$ is the group homology of $C_2$ with coefficients in $F([n])$. In other words, it is a bicomplex of $k$-modules which computes the hyperhomology of $C_2$ with coefficients in the complex $C_{\star}(F)$.
\end{proof}

\begin{prop}
\label{char-zero-prop}
Suppose that $2$ is invertible in the ground ring. Let $A$ be an involutive $k$-algebra and let $M$ be an involutive $A$-bimodule. There exist isomorphisms of graded $k$-modules
\[HR_{\star}^{+}(A,M)\cong H_{\star}\left(\frac{C_{\star}(A,M)}{(1-r)} \right)\quad \text{and} \quad  HR_{\star}^{-}(A,M)\cong H_{\star}\left(\frac{C_{\star}(A,M)}{(1+r)}\right).\]
\end{prop}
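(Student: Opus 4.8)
The plan is to reduce immediately to Proposition~\ref{hyperhom-prop} and then to exploit the invertibility of $2$ to collapse the group homology of $C_2$ into degree zero. By Proposition~\ref{hyperhom-prop}, $HR_{\star}^{+}(A,M)$ is the hyperhomology of $C_2$ with coefficients in the Hochschild complex $C_{\star}(A,M)$, where the generator of $C_2$ acts by the map $r$ of Definition~\ref{loday-defn}; likewise $HR_{\star}^{-}(A,M)$ is the hyperhomology of $C_2$ with coefficients in the same complex, but with the generator acting by $-r$ (the difference between $\mathcal{L}^{+}(A,M)$ and $\mathcal{L}^{-}(A,M)$ being exactly this sign). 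It therefore suffices to prove the following general fact: when $2$ is invertible in $k$, the hyperhomology of $C_2$ with coefficients in a nonnegatively graded complex $N_{\star}$ of $k[C_2]$-modules is naturally isomorphic to the homology of the complex of coinvariants $(N_{\star})_{C_2} = N_{\star}/(1-r)N_{\star}$.

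First I would recall that this hyperhomology is computed by $k\otimes^{\mathbf{L}}_{k[C_2]} N_{\star}$, where $k$ carries the trivial action, and that for a single module $N$ one has $H_0(C_2;N) = k\otimes_{k[C_2]} N = N/(1-r)N = N_{C_2}$. The key input is Maschke's theorem: since $2$ is invertible, the group algebra $k[C_2]$ is semisimple, so every $k[C_2]$-module is projective and the coinvariants functor $(-)_{C_2}$ is exact. Consequently the derived tensor product is underived,
\[
k\otimes^{\mathbf{L}}_{k[C_2]} N_{\star} \;\simeq\; k\otimes_{k[C_2]} N_{\star} \;=\; (N_{\star})_{C_2},
\]
whence $\mathbb{H}_{n}(C_2;N_{\star}) \cong H_{n}\bigl((N_{\star})_{C_2}\bigr)$. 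Equivalently, one may run the hyperhomology spectral sequence $E^{2}_{p,q} = H_{p}\bigl(C_2; H_{q}(N_{\star})\bigr) \Rightarrow \mathbb{H}_{p+q}(C_2;N_{\star})$, observe that $H_{p}(C_2;-)$ vanishes for $p>0$ by semisimplicity so that it collapses onto the row $p=0$, and then commute coinvariants past homology using their exactness.

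It remains to unwind the coinvariants in each case. For the $+$ action the generator acts by $r$, so $(C_{\star}(A,M))_{C_2} = C_{\star}(A,M)/(1-r)C_{\star}(A,M)$, yielding the first isomorphism; for the $-$ action the generator acts by $-r$, so $(1-(-r)) = (1+r)$ and the coinvariants become $C_{\star}(A,M)/(1+r)C_{\star}(A,M)$, yielding the second. The naturality in Proposition~\ref{hyperhom-prop} guarantees these are isomorphisms of graded $k$-modules.

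I expect the only genuine subtlety to be the passage from the coinvariants of the homology to the homology of the coinvariants, that is, justifying that $(-)_{C_2}$ commutes with taking the homology of the Hochschild complex. This is precisely the point at which the hypothesis that $2$ be invertible is indispensable: it is handled cleanly by the exactness of coinvariants furnished by Maschke's theorem, and without it the higher group homology $H_{p}(C_2;-)$ would contribute nontrivially and the spectral sequence would fail to collapse.
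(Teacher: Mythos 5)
Your proposal is correct and is essentially the paper's own argument in different packaging: the paper takes the horizontal homology spectral sequence of the bicomplex $C_{\star,\star}\otimes_{\Delta R^{op}}\mathcal{L}^{\pm}(A,M)$ (the same bicomplex underlying Proposition~\ref{hyperhom-prop}), identifies the row homology with $H_{\star}\left(C_2, M\otimes A^{\otimes n}\right)$ for the given actions, and uses invertibility of $2$ to kill everything outside the column $p=0$, leaving exactly the coinvariant Hochschild complex; your route through Proposition~\ref{hyperhom-prop} followed by the collapse of $C_2$-hyperhomology onto coinvariants is the same idea expressed in derived-functor language.

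One correction is needed in your justification. The appeal to Maschke's theorem is overstated for a general commutative ground ring: if $2$ is invertible then $k[C_2]\cong k\times k$ as rings, but this is semisimple only when $k$ itself is, and it is false that every $k[C_2]$-module is projective (for instance $\mathbb{Z}/3$ with trivial action over $\mathbb{Z}[1/2]$). What your argument actually needs --- and what is true --- is only that the trivial module $k$ is projective, hence flat, as a $k[C_2]$-module: it is a direct summand of $k[C_2]$ via $1\mapsto \tfrac{1}{2}(1+t)$. This yields both the exactness of the coinvariants functor and the vanishing of $H_p(C_2;-)$ for $p>0$ (alternatively, positive-degree homology of $C_2$ is annihilated by $2$, which is invertible), and with that substitution the rest of your proof goes through verbatim.
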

\begin{proof}
Consider the horizontal homology spectral sequences of the bicomplexes of $k$-modules $C_{\star ,\star}\otimes_{\Delta R^{op}}\mathcal{L}^{+}(A,M)$ and $C_{\star, \star}\otimes_{\Delta R^{op}} \mathcal{L}^{-}(A,M)$. In each case, the homology of the rows of the bicomplex is $H_{\star}\left(C_2, M\otimes  A^{\otimes n}\right)$, for the given actions of $C_2$ on $M\otimes A^{\otimes n}$. Since $2$ is invertible in $k$ and $C_2$ is finite, $H_{n}\left(C_2, M \otimes A^{\otimes n}\right)=0$ for $n\geqslant 1$. Therefore, in each case, the $E^1$-page consists of the Hochschild complex with the $C_2$ action factored out, concentrated in the column $p=0$.
\end{proof}

\section{Reflexive hyperhomology}
\label{ref-hyperhom-sec}
We can extend the definition of reflexive homology to chain complexes of left $\Delta R^{op}$-modules by defining reflexive hyperhomology. This is a property common to all crossed simplicial groups, as described in \cite[Section 3]{Dunn} for example. We define reflexive hyperhomology in terms of hypertor functors (see \cite[5.7.8]{weib} for instance) over the category $\Delta R^{op}$. In our case, these can be described explicitly as the hyper-derived functors of the tensor product $k^{\ast} \otimes_{\Delta R^{op}}-$ where $k^{\ast}$ is the $k$-constant right $\Delta R^{op}$-module. 

\begin{defn}
A non-negatively graded reflexive chain complex is a functor $F\colon \Delta R^{op}\rightarrow \mathbf{ChCpx}$, where $\mathbf{ChCpx}$, is the category of non-negatively graded chain complexes of $k$-modules. Equivalently, a reflexive chain complex is a non-negatively graded chain complex of left $\Delta R^{op}$-modules.
\end{defn}

\begin{eg}
One example of a reflexive chain complex arises by composing a reflexive topological space $X\colon \Delta R^{op} \rightarrow \mathbf{Top}$ with the singular chain complex functor $S_{\star}(-,k)$.
\end{eg}

\begin{eg}
\label{sing-chain-eg2}
Another important example of a reflexive chain complex arises from involutive DGAs. Recall that a DGA (\emph{differential graded algebra}, or sometimes \emph{chain algebra}), $(A,d)$, is a graded $k$-algebra, $A$, equipped with a $k$-linear map $d\colon A \rightarrow A$ satisfying the following properties. The map $d$ has degree $-1$, it squares to the identity and it satisfies the graded Leibniz rule: $d(ab)=(da)b+(-1)^{\llv a \rrv}a(db)$, where $\llv a \rrv$ denotes the degree of a homogeneous element $a$.

An involutive DGA, $(A,d,i)$, is a DGA, $(A,d)$, equipped with a chain map 
\[i\colon (A,d)\rightarrow (A,d),\]
written $i(a)=\ol{a}$. The map $i$ must square to the identity and satisfy $\ol{ab}= (-1)^{\llv a \rrv\cdot\llv b \rrv} \ol{b}\ol{a}$.

Given an involutive DGA $(A,d,i)$, we can form its reflexive bar construction, $\Gamma\left(A,d,i\right)$, using the simplicial and reflexive structure described in \cite[Section 3]{Dunn}. The $n$-simplices are given by $(A,d)^{\otimes (n+1)}$. Let $a=a_0\otimes \cdots \otimes a_n$. The face maps are given by 
\[\partial_i(a)=\begin{cases}
a_0\otimes \cdots \otimes a_ia_{i+1}\otimes \cdots \otimes a_n & 0\leqslant i \leqslant n-1\\
(-1)^{\llv a_n\rrv\left(\llv a_0\rrv +\cdots +\llv a_{n-1} \rrv\right)} a_na_0\otimes a_1\otimes \cdots \otimes a_{n-1} & i=n.
\end{cases}
\] 
The degeneracy maps are given by 
\[s_j(a)= a_0\otimes \cdots \otimes a_j \otimes 1 \otimes a_{j+1} \otimes \cdots \otimes a_n\]
for $0\leqslant j\leqslant n$.

The reflexive operators are defined by 
\[r(a)= (-1)^{\lvert\lvert a \rvert\rvert\cdot n(n-1)/2}\, \ol{a_0}\otimes \ol{a_n}\otimes \cdots \otimes \ol{a_1}\]
where $\lvert\lvert a \rvert\rvert = \sum_{i=1}^{n-1} \sum_{j=i+1}^n \llv a_i\rrv \cdot \llv a_j\rrv$. 

This is another example of a reflexive chain complex.

One example that will be of particular interest in Section \ref{moore-loop-free-loop-subsec} is the following. If $M$ is an involutive topological monoid, the singular chain complex $S_{\star}(M,k)$ is an involutive DGA with the involution induced from the one on $M$. 
\end{eg}

We now define reflexive homology of a chain complex of left $\Delta R^{op}$-modules

\begin{defn}
\label{hyperhom-defn}
Let $k_0^{\ast}$ denote the right $\Delta R^{op}$-module $k^{\ast}$ thought of as a chain complex concentrated in degree zero. Let $F_{\star}$ be a non-negatively graded chain complex of left $\Delta R^{op}$-modules. For each $n\geqslant 0$, we define the $n^{th}$ reflexive homology of $F_{\star}$ by 
\[HR_{n}\left(F_{\star}\right) = \mathbf{Tor}_{n}^{\Delta R^{op}}\left(k_0^{\ast} , F_{\star}\right).\]
\end{defn}

\begin{rem}
We can use the biresolution of Definition \ref{bires-defn} to construct a chain complex for computing the reflexive homology of a reflexive chain complex. Since the chain complex $k_0^{\ast}$ is concentrated in degree zero, the tensor product of chain complexes $k_0^{\ast}\otimes_{\Delta R^{op}} F_{\star}$ is equivalent to applying the functor $k^{\ast}\otimes_{\Delta R^{op}}-$ to the chain complex $F_{\star}$ degree-wise. Replacing $k^{\ast}$ by the biresolution of Definition \ref{bires-defn} we obtain a chain complex of bicomplexes, that is, a tricomplex. Applying the total complex functor for tricomplexes we obtain a chain complex which calculates $\mathbf{Tor}_{n}^{\Delta R^{op}}\left(k_0^{\ast} , F_{\star}\right)$.
\end{rem}

\begin{rem}
We observe that if we consider a left $\Delta R^{op}$-module as a chain complex concentrated in degree zero then the reflexive hyperhomology given in Definition \ref{hyperhom-defn} coincides with the definition of reflexive homology given in Definition \ref{refhom-defn}.
\end{rem}

\section{Morita invariance}
\label{morita-sec}
In this section we prove Morita invariance results for reflexive homology. It is a remarkable property of Hochschild homology \cite[Theorem 3.7]{Dennis-Igusa} (see also \cite[Section 1.2]{Lod}), cyclic homology \cite[Corollary 1.7]{LQ} and dihedral homology \cite[Theorem 3.4]{KLS-dihed} that the given homology theory applied to the algebra of $(m\times m)$-matrices with entries in a $k$-algebra $A$, involutive in the case of dihedral homology, is isomorphic to the homology of $A$. We prove that reflexive homology shares this property. We also prove a more general result. We show that if two algebras are Hermitian Morita equivalent and satisfy a compatibility condition, then they have the same reflexive homology. It is worth remarking that Morita invariance is not a property shared by every homology theory associated to a crossed simplicial group. For example, it is shown in \cite[Remark 88]{Ault} and \cite[Corollary 5.11]{DMG-e-inf} that neither symmetric homology nor hyperoctahedral homology satisfy Morita invariance.

\subsection{Morita equivalence and Hermitian Morita equivalence}
We begin by recalling the definition of Morita equivalence for two (not necessarily involutive) $k$-algebras.

\begin{defn}
\label{Morita-defn}
Two unital $k$-algebras, $A$ and $B$, are said to be \emph{Morita equivalent} if there is an $A$-$B$-bimodule $P$ and a $B$-$A$-bimodule $Q$ together with an isomorphism of $A$-bimodules, $u\colon P\otimes_B Q \rightarrow A$, and an isomorphism of $B$-bimodules, $v\colon Q\otimes_{A} P\rightarrow B$.
\end{defn}

The concept of Morita equivalence can be extended to involutive $k$-algebras. This notion is known as \emph{Hermitian Morita equivalence}. This was introduced by Fr\"olich and McEvett \cite[Section 8]{F-Mc} (see also \cite{Hahn} and \cite[Section 2]{FS}).

\begin{defn}
\label{Hermitian-Morita-defn}
Let $A$ and $B$ be two unital, involutive $k$-algebras. We say that $A$ and $B$ are \emph{Hermitian Morita equivalent} if:
\begin{itemize}
\item $A$ and $B$ are Morita equivalent in the sense of Definition \ref{Morita-defn};
\item the isomorphisms $u$ and $v$ satisfy
\begin{itemize}
\item $u(p\otimes q)p^{\prime}= pv(q\otimes p^{\prime})$ and
\item $v(q\otimes p)q^{\prime}=qu(p\otimes q^{\prime})$
\end{itemize}
for all $p,\,p^{\prime}\in P$ and $q,\,q^{\prime}\in Q$;
\item there exists an additive bijection $\theta\colon P\rightarrow Q$ satisfying
\begin{itemize}
\item $\theta(apb)= \ol{b} \,\theta(p) \,\ol{a}$ for all $a \in A$, $b\in B$ and $p\in P$,
\item $u\left(p\otimes \theta(p^{\prime})\right) = \ol{u\left(p^{\prime}\otimes \theta(p)\right)}$ and
\item $v\left(\theta(p)\otimes p^{\prime}\right) = \ol{v\left(\theta(p^{\prime})\otimes p\right)}$.
\end{itemize}
\end{itemize}
\end{defn}

\begin{rem}
If follows from the fact that $u$ and $v$ are isomorphisms that there exist sets of elements in $P$, say $\llb p_1,\dotsc , p_l\rrb$ and $\llb p_1^{\prime} , \dotsc , p_m^{\prime}\rrb$, and sets of elements in $Q$, say $\llb q_1,\dotsc , q_l\rrb$ and $\llb q_1^{\prime} , \dotsc , q_m^{\prime}\rrb$, such that 
\[u\left(\sum_{i=1}^l p_j\otimes q_j\right) = 1_A \quad \text{and} \quad v\left(\sum_{j=1}^m q_k^{\prime}\otimes p_k^{\prime}\right) =1_B.\]
\end{rem}

\begin{defn}
\label{compatible-defn}
Let $A$ and $B$ be two unital, involutive $k$-algebras. Suppose that $A$ and $B$ are Hermitian Morita equivalent. Suppose that the additive bijection $\theta$ sends the set $\llb p_1,\dotsc , p_l\rrb$ to the set $\llb q_1,\dotsc , q_l\rrb$. In this case we say that $A$ and $B$ are \emph{compatible}.
\end{defn}

\subsection{The case of involutive matrix algebras} 
In this subsection we prove that reflexive homology satisfies Morita invariance for matrix algebras.

Let $A$ be an involutive $k$-algebra. The algebra of $(m\times m)$-matrices with entries in $A$, $\mathcal{M}_m(A)$, is an involutive $k$-algebra with involution defined by $\ol{\left(x_{ij}\right)}=\left(\ol{x_{ji}}\right)$. In other words, we take the transpose and apply the involution of $A$ entry-wise.
 
For $n\geqslant 0$, we define a $k$-module morphism
\[\mathrm{Tr}_n\colon \mathcal{M}_m(A)^{\otimes (n+1)}\rightarrow A^{\otimes (n+1)}\]
by
\[\mathrm{Tr}_n\left(X^{(0)}\otimes\cdots \otimes X^{(n)}\right) = \sum_{1\leqslant i_0,\dotsc , i_n\leqslant m} x_{i_0i_1}^{(0)}\otimes x_{i_1i_2}^{(1)}\otimes \cdots \otimes x_{i_ni_0}^{(n)}\]
where $x_{ij}^{(k)}$ is the element in the $i^{th}$ row of the $j^{th}$ column of the matrix $X^{(k)}$. As noted in \cite[Section 3]{KLS-dihed} this map is compatible with simplicial structure and the involution operators $r_n$ for $n\geqslant 0$. It follows that there are induced maps
\[\mathrm{Tr}_{\star}\colon HR_n^{\pm}\left(\mathcal{M}_m(A)\right) \rightarrow HR_n^{\pm}(A)\]
on reflexive homology for all $m\geqslant 1$ and $n\geqslant 0$.

\begin{thm}
\label{morita-thm}
The morphism
\[\mathrm{Tr}_{\star}\colon HR_n^{\pm}\left(\mathcal{M}_m(A)\right) \rightarrow HR_n^{\pm}(A)\]
on reflexive homology induced from the trace map is an isomorphism for all $m\geqslant 1$ and $n\geqslant 0$.
\end{thm}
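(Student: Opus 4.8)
\section*{Proof proposal}

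The plan is to reduce the statement to the classical Morita invariance of Hochschild homology, exploiting the identification of reflexive homology as a hyperhomology provided by Proposition \ref{hyperhom-prop}. For any involutive $k$-algebra $B$, the chain complex $C_{\star}(\mathcal{L}^{\pm}(B))$ underlying the reflexive homology is nothing but the Hochschild complex $C_{\star}(B)$, equipped with the structure of a chain complex of $k[C_2]$-modules via the reflexive operators $r_n$ (with the extra sign in the $\mathcal{L}^{-}$ case). By Proposition \ref{hyperhom-prop}, which is natural in its argument, $HR_{\star}^{\pm}(B)$ is the hyperhomology of $C_2$ with coefficients in this complex of $k[C_2]$-modules.

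First I would verify that the maps $\mathrm{Tr}_n$ assemble into a morphism of chain complexes of $k[C_2]$-modules $\mathrm{Tr}\colon C_{\star}(\mathcal{L}^{\pm}(\mathcal{M}_m(A)))\rightarrow C_{\star}(\mathcal{L}^{\pm}(A))$. That $\mathrm{Tr}$ is a chain map for the Hochschild differential $b=\sum_i(-1)^i\partial_i$ is immediate from its compatibility with the simplicial face operators, and its $C_2$-equivariance follows from its compatibility with the operators $r_n$; both facts are recorded above, following \cite[Section 3]{KLS-dihed}. The sign distinguishing $\mathcal{L}^{-}$ from $\mathcal{L}^{+}$ is carried along identically on the source and target, so the \emph{same} trace map serves for both signs.

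Next I would invoke Morita invariance of Hochschild homology \cite[Theorem 3.7]{Dennis-Igusa} (see also \cite[Section 1.2]{Lod}): the trace map induces an isomorphism $HH_{\star}(\mathcal{M}_m(A))\xrightarrow{\cong} HH_{\star}(A)$, that is, $\mathrm{Tr}$ is a quasi-isomorphism of the underlying complexes. Since these complexes are non-negatively graded, and hyperhomology, being a left hyper-derived functor, carries quasi-isomorphisms of bounded-below complexes to isomorphisms, the induced map on $HR_{\star}^{\pm}$ is an isomorphism. Concretely, one may instead run the hyperhomology spectral sequence of Proposition \ref{hyperhom-prop}, whose $E^2$-page is $H_p(C_2, HH_q(-))$: the trace map induces a morphism of spectral sequences which on $E^2$ is $H_p(C_2,\mathrm{Tr}_{\star})$, an isomorphism of $k$-modules because $\mathrm{Tr}_{\star}$ is an equivariant isomorphism of $k[C_2]$-modules. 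The comparison theorem for first-quadrant homological spectral sequences then delivers the isomorphism on the abutment.

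The main obstacle I anticipate is not homological but one of bookkeeping: confirming the $C_2$-equivariance of $\mathrm{Tr}$ on the nose, including the signs in the $\mathcal{L}^{-}$ case and the interaction between the conjugate-transpose involution on $\mathcal{M}_m(A)$ and the involution on $A$, so that $\mathrm{Tr}$ genuinely lives in the category of chain complexes of $k[C_2]$-modules rather than merely commuting with the involution up to homotopy. Once this equivariance is pinned down, the result is a formal consequence of Hochschild Morita invariance together with the standard fact that hyperhomology depends only on the quasi-isomorphism type of its coefficient complex.
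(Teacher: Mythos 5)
Your proposal is correct and follows essentially the same route as the paper: both identify $HR_{\star}^{\pm}$ with $C_2$-hyperhomology of the Hochschild complex via Proposition \ref{hyperhom-prop}, feed in the $C_2$-equivariance of $\mathrm{Tr}$ and Morita invariance of Hochschild homology, and conclude by comparing the spectral sequences of the bicomplexes $C_{\star,\star}\otimes_{\Delta R^{op}}\mathcal{L}^{\pm}(-)$ using the comparison theorem \cite[5.2.12]{weib}. Your alternative phrasing (hyperhomology sends quasi-isomorphisms of bounded-below complexes to isomorphisms) is a clean repackaging of the same argument rather than a different proof.
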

\begin{proof}
By Proposition \ref{hyperhom-prop} we can consider $\mathrm{Tr}_{\star}$ as a morphism on hyperhomology. By the Morita invariance for Hochschild homology the trace maps induce isomorphisms $HH_{\star}\left(\mathcal{M}_m(A)\right) \rightarrow HH_{\star}(A)$.

Consider the bicomplexes $C_{\star,\star}\otimes_{\Delta R^{op}} \mathcal{L}^{\pm}\left(\mathcal{M}_m(A)\right)$ and $C_{\star,\star}\otimes_{\Delta R^{op}} \mathcal{L}^{\pm}\left(A\right)$. The isomorphisms $HH_{\star}\left(\mathcal{M}_m(A)\right) \rightarrow HH_{\star}(A)$ ensure that the vertical homology spectral sequences for these bicomplexes have isomorphic $E^1$-pages. The result now follows from the comparison theorem \cite[5.2.12]{weib}.
\end{proof} 

\subsection{A more general result}
In this section we prove a more general Morita invariance statement for reflexive homology. We show that if two algebras are Hermitian Morita equivalent and are compatible in the sense of Definition \ref{compatible-defn}, then they have the same reflexive homology.

\begin{defn}
\label{coeffs-involution-defn}
Let $A$ and $B$ be two unital, involutive $k$-algebras. Suppose that $A$ and $B$ are Hermitian Morita equivalent, so there exist bimodules $P$ and $Q$ and an additive bijection $\theta\colon P\rightarrow Q$ as in Definitions \ref{Morita-defn} and \ref{Hermitian-Morita-defn}. Let $M$ be an involutive $A$-bimodule. We define an involution the the $B$-bimodule $Q\otimes_A M\otimes_A P$ by
\[\ol{q\otimes m \otimes p}=\theta(p)\otimes \ol{m} \otimes \theta^{-1}(q).\] 
\end{defn}

\begin{thm}
Let $A$ and $B$ be two unital, involutive $k$-algebras. Suppose that they are Hermitian Morita equivalent and that they are compatible in the sense of Definition \ref{compatible-defn}. Let $M$ be an involutive $A$-bimodule. There exist natural isomorphisms of graded $k$-modules
\[HR_{\star}^{\pm}(A,M) \cong HR_{\star}^{\pm}(B, Q\otimes_A M \otimes_A P),\]
where $Q\otimes_A M \otimes_A P$ is equipped with the involution of Definition \ref{coeffs-involution-defn}.
\end{thm}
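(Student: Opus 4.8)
The plan is to imitate the proof of Theorem \ref{morita-thm}, with the matrix trace replaced by the \emph{generalized trace map} that witnesses Morita invariance of Hochschild homology with coefficients. Write $N = Q\otimes_A M \otimes_A P$ and identify $B\cong Q\otimes_A P$ through the isomorphism $v$, so that a generic generator of the degree-$n$ term $N\otimes B^{\otimes n}$ of the Hochschild complex $C_{\star}(B,N)$ has the form
\[(q_0\otimes m\otimes p_0)\otimes (q_1\otimes p_1)\otimes \cdots \otimes (q_n\otimes p_n).\]
First I would introduce the generalized trace $\mathrm{Tr}_n\colon N\otimes B^{\otimes n}\rightarrow M\otimes A^{\otimes n}$ defined by cyclically contracting the $P$-factor of each slot against the $Q$-factor of the following slot through the pairing $u$, absorbing the terminal contraction $u(p_n\otimes q_0)$ into the coefficient $m$. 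Using the Hermitian compatibility relations $u(p\otimes q)p' = p\,v(q\otimes p')$ and $v(q\otimes p)q' = q\,u(p\otimes q')$ of Definition \ref{Hermitian-Morita-defn}, a routine check shows that the maps $\mathrm{Tr}_n$ commute with the faces $\partial_i$ and degeneracies $s_j$, so they assemble into a chain map $\mathrm{Tr}\colon C_{\star}(B,N)\rightarrow C_{\star}(A,M)$. That $\mathrm{Tr}$ is a quasi-isomorphism is the classical Morita invariance of Hochschild homology with coefficients, which I would cite exactly as the isomorphisms $HH_{\star}(\mathcal{M}_m(A))\to HH_{\star}(A)$ were cited in the proof of Theorem \ref{morita-thm}.

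The crux of the argument, and the step I expect to be the main obstacle, is to show that $\mathrm{Tr}$ is $C_2$-equivariant, that is $\mathrm{Tr}_n\circ r_n = r_n\circ \mathrm{Tr}_n$ for the reflexive operators defining $\mathcal{L}^{\pm}(B,N)$ and $\mathcal{L}^{\pm}(A,M)$. On the source the operator $r_n$ uses the involution $\ol{q\otimes m\otimes p}=\theta(p)\otimes \ol{m}\otimes \theta^{-1}(q)$ of Definition \ref{coeffs-involution-defn} together with the involution of $B$, while on the target it uses the involutions of $A$ and of $M$. Equating the two composites and reindexing the cyclic contraction reduces the identity to relations among $u$, $v$, $\theta$ and the two bar involutions. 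Here I would use all three properties of $\theta$ from Definition \ref{Hermitian-Morita-defn}, namely $\theta(apb)=\ol{b}\,\theta(p)\,\ol{a}$, $u(p\otimes \theta(p'))=\ol{u(p'\otimes\theta(p))}$ and $v(\theta(p)\otimes p')=\ol{v(\theta(p')\otimes p)}$; it is precisely at this point that the compatibility hypothesis of Definition \ref{compatible-defn}, that $\theta$ carries $\{p_1,\dotsc,p_l\}$ onto $\{q_1,\dotsc,q_l\}$, is needed to match the reversed contraction against its image under the involution. The signs separate the $+$ and $-$ cases but the computation is otherwise uniform.

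With $C_2$-equivariance in hand, the proof concludes exactly as for Theorem \ref{morita-thm}. By Proposition \ref{hyperhom-prop}, $HR_{\star}^{\pm}(A,M)$ and $HR_{\star}^{\pm}(B,N)$ are the hyperhomologies of $C_2$ with coefficients in the respective Hochschild complexes, computed by the bicomplexes $C_{\star,\star}\otimes_{\Delta R^{op}}\mathcal{L}^{\pm}(A,M)$ and $C_{\star,\star}\otimes_{\Delta R^{op}}\mathcal{L}^{\pm}(B,N)$. The equivariant chain map $\mathrm{Tr}$ induces a morphism of these bicomplexes whose effect on the $E^1$-page of the vertical homology spectral sequence is the map induced by $\mathrm{Tr}$ on Hochschild homology, which is an isomorphism. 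The comparison theorem \cite[5.2.12]{weib} then yields the isomorphism $HR_{\star}^{\pm}(A,M)\cong HR_{\star}^{\pm}(B,Q\otimes_A M\otimes_A P)$, and naturality follows from the naturality of each map used.
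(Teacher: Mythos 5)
Your overall strategy — produce a $C_2$-equivariant chain map between the two Hochschild complexes that is a non-equivariant quasi-isomorphism, then conclude by the vertical-homology spectral sequence and the comparison theorem exactly as in Theorem \ref{morita-thm} — is sound, and your final paragraph is a correct way to finish. The gap is that the map you build the proof on does not exist: the ``generalized trace'' $\mathrm{Tr}_n\colon N\otimes B^{\otimes n}\to M\otimes A^{\otimes n}$ given by naive cyclic contraction is not well defined. You prescribe it on generators $(q_0\otimes m\otimes p_0)\otimes(q_1\otimes p_1)\otimes\cdots\otimes(q_n\otimes p_n)$, but the formula does not respect the balance relations of the internal tensor products over $A$. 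Already for $n=1$,
\[
(q_0\otimes ma\otimes p_0)\otimes(q_1\otimes p_1)\;=\;(q_0\otimes m\otimes ap_0)\otimes(q_1\otimes p_1)
\]
holds in $N\otimes B$, yet your formula sends the two sides to
\[
u(p_1\otimes q_0)\,ma\otimes u(p_0\otimes q_1)
\quad\text{and}\quad
u(p_1\otimes q_0)\,m\otimes a\,u(p_0\otimes q_1),
\]
which are different elements of $M\otimes_k A$, since $a$ cannot be moved across $\otimes_k$. (The matrix trace of Theorem \ref{morita-thm} does not suffer from this: the standard basis vectors furnish canonical elements with $u\bigl(\sum_i p_i\otimes q_i\bigr)=1_A$, and the entrywise trace is secretly the resulting ``unit decomposition'' formula, not a naive contraction.) There is a second, independent failure: even treated purely formally, the contraction does not commute with the reflexive operators. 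Writing $A_i = u\bigl(\theta^{-1}(q_i)\otimes\theta(p_{i-1})\bigr)$ with indices modulo $n+1$, one computes
\[
r_n\bigl(\mathrm{Tr}_n(x)\bigr)=\overline{m}\,A_0\otimes A_n\otimes A_{n-1}\otimes\cdots\otimes A_1,
\qquad
\mathrm{Tr}_n\bigl(r_n(x)\bigr)=A_1\,\overline{m}\otimes A_0\otimes A_n\otimes\cdots\otimes A_2,
\]
which differ by a cyclic rotation, because reflecting the tensor string changes which contraction ``wraps around'' into the coefficient slot. Note also that in your construction the compatibility hypothesis of Definition \ref{compatible-defn} has nothing to act on: the contraction never mentions the chosen elements $\llb p_1,\dotsc,p_l\rrb$, $\llb q_1,\dotsc,q_l\rrb$, which is a symptom that this is the wrong map.

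The paper's proof avoids both problems by working in the opposite direction: it takes Loday's chain homotopy equivalence $\psi_{\star}\colon C_{\star}(A,M)\rightarrow C_{\star}(B,Q\otimes_A M\otimes_A P)$ of \cite[1.2.7]{Lod}, whose formula is built from the fixed elements $p_i,q_i$ of the unit decompositions and is therefore $k$-linear only in $m,a_1,\dotsc,a_n$, hence well defined; the compatibility hypothesis then genuinely enters when one checks that $\psi_{\star}$ commutes with the involutions. Once an equivariant map that is a (non-equivariant) chain homotopy equivalence or quasi-isomorphism is in hand, your concluding spectral-sequence argument — or simply the observation that such a map induces an isomorphism of the bicomplexes' homology — finishes the proof. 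So the repair is to replace your trace by $\psi_{\star}$ (or by a trace defined via the unit decompositions) and to carry out the equivariance verification for that map instead.
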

\begin{proof}
In \cite[1.2.7]{Lod}, Loday defines a chain homotopy equivalence
\[\psi_{\star}\colon C_{\star}(A,M)\rightarrow C_{\star}\left(B, Q\otimes_A M\otimes_A P\right)\]
between the Hochschild complexes. One can check that this is compatible with the involutions on both complexes, yielding a $C_2$-equivariant chain homotopy equivalence, from which the result follows.
\end{proof}

\section{Calculations}
\label{calc-sec}

In this section we provide some computations of reflexive homology. We can calculate the reflexive homology of the ground ring in two different ways; using the theory of crossed simplicial groups and direct computation from our bicomplex in the previous section. We demonstrate that our calculations agree with what is already known when working over a field of characteristic zero. We also give explicit descriptions of reflexive homology in degree zero for a commutative algebra.

\begin{prop}
There is an isomorphism of graded $k$-modules $HR_{\star}^{+}\left(k\right) \cong H_{\star}\left(BC_2 , k\right)$.
\end{prop}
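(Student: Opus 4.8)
The plan is to compute $HR_{\star}^{+}(k)$ directly, where $k$ is the ground ring viewed as an involutive $k$-algebra with the trivial involution $\ol{a}=a$. Here $\mathcal{L}^{+}(k)$ sends $[n]\mapsto k^{\otimes(n+1)}\cong k$, and since the involution on $k$ is trivial, the reflexive operator $r_n$ acts on each summand $k$ of the Loday functor by the identity. The key observation is that the underlying simplicial $k$-module $\mathcal{L}(k)$ is the standard one computing $HH_{\star}(k)$, which is concentrated in degree zero; the only new input is the $C_2$-action via the $r_n$.

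By Proposition \ref{hyperhom-prop}, $HR_{\star}^{+}(k)$ is the hyperhomology of $C_2$ with coefficients in the Hochschild complex $C_{\star}(k)$. The strategy is to analyse this via the associated bicomplex $C_{\star,\star}\otimes_{\Delta R^{op}}\mathcal{L}^{+}(k)$ of Definition \ref{bires-defn}. First I would run the \emph{vertical} homology spectral sequence: the column homology computes the Hochschild homology $HH_{\star}(k)$, which vanishes except in degree zero where it is $k$. Thus the $E^1$-page collapses onto the row $q=0$, and what survives is the horizontal complex in that row. On the row $q\equiv 0\,(\mathrm{mod}\,4)$ at $q=0$, the horizontal differentials alternate between $1-r_0$ and $1+r_0$ according to the parity of $p$ given in Definition \ref{bires-defn}; since $r_0$ acts trivially on $\mathcal{L}^{+}(k)([0])\cong k$, these become alternately the zero map ($1-r_0=0$) and multiplication by $2$ ($1+r_0=2$).

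The resulting complex in the row $q=0$ is therefore
\[
\cdots \xrightarrow{2} k \xrightarrow{0} k \xrightarrow{2} k \xrightarrow{0} k,
\]
which is precisely the standard periodic complex $k[C_2]$-resolution computing the group homology $H_{\star}(C_2,k)=H_{\star}(BC_2,k)$ with trivial coefficients. This identifies the $E^2$-page, and hence $HR_{\star}^{+}(k)$, with $H_{\star}(BC_2,k)$. Since the spectral sequence degenerates at this stage (the $E^1$-page is concentrated in a single row), there are no extension problems and the identification is an isomorphism of graded $k$-modules.

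I expect the main obstacle to be purely bookkeeping rather than conceptual: one must carefully verify that the four-case prescription for the horizontal differential $d$ in Definition \ref{bires-defn}, once evaluated at $q=0$ with $r_0=\mathrm{id}$, really does reproduce the alternating $0,\,2,\,0,\,2,\dots$ pattern of the periodic resolution of $C_2$ in the correct degrees, and to confirm the sign conventions match $H_{\star}^{+}$ (the $+$ decoration) rather than $H_{\star}^{-}$. Once the differentials are pinned down, the collapse of the spectral sequence and the comparison with the standard $BC_2$ resolution are routine.
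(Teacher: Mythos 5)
Your proof is correct, but it takes a genuinely different route from the paper's. The paper proves this proposition in one line by appealing to the Fiedorowicz--Loday theorem \cite[Corollary 6.13]{FL}: taking $T$ to be the trivial one-point reflexive set, one has $k[T]\cong \mathcal{L}^{+}(k)$ as left $\Delta R^{op}$-modules, whence $HR_{\star}^{+}(k)\cong HR_{\star}^{+}(k[T])\cong H_{\star}\left(EC_2\times_{C_2}\lvert T\rvert , k\right)\cong H_{\star}(BC_2,k)$, with no differentials computed at all. Your direct bicomplex computation is instead essentially the argument the paper gives for the subsequent Proposition \ref{degree-zero-prop}; indeed the paper remarks between the two statements that the same calculation can also be obtained directly from the bicomplex of Definition \ref{bires-defn}. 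The trade-off is this: the paper's route is conceptual, identifying the answer with the homology of a Borel construction without ever producing the groups in closed form, while your route produces the explicit answer ($k$ in degree $0$, $k/2k$ in odd degrees, ${}_2k$ in positive even degrees) and then rests on recognising the row-zero complex $k \xleftarrow{0} k \xleftarrow{2} k \xleftarrow{0} \cdots$ as the standard periodic complex computing $H_{\star}(C_2,k)\cong H_{\star}(BC_2,k)$ --- a step you handle correctly, including the bookkeeping that at $q=0$ (so $q\equiv 0 \pmod 4$) the differential is $1-r_0=0$ for $p$ odd and $1+r_0=2$ for $p$ even. Your collapse argument is sound since the $E^1$-page of the first-quadrant bicomplex is concentrated in a single row, so there are no extension issues.
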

\begin{proof}
Let $T\colon \Delta R^{op}\rightarrow \mathbf{Set}$ denote the trivial reflexive set. Explicitly, $T$ sends every object in $\Delta R^{op}$ to the one point set $\llb \ast\rrb$ with trivial involution. Using \cite[Corollary 6.13]{FL} we have
\[HR_{\star}^{+}(k) \cong HR_{\star}^{+}\left(k[T]\right) \cong H_{\star}\left(EC_2\times_{C_2} \left\lvert T\right\rvert, k\right) \cong H_{\star}\left(EC_2\times_{C_2} \ast , k\right) \cong H_{\star}\left(BC_2 , k\right)\]
as required.
\end{proof}

We can also obtain this calculation directly from the bicomplex defined in the previous section.

\begin{prop}
\label{degree-zero-prop}
Let $k$ be a commutative ring with trivial involution. Then
\begin{alignat*}{2}
     \begin{aligned}   
     HR_n^{+}(k) \cong \begin{cases}
k & n=0 \\
k/2k & n \text{ odd}\\
{}_2k & n>0 \text{ even}
\end{cases}
  \end{aligned}
   & \hskip 6em  &
  \begin{aligned}
   HR_n^{-}(k)\cong \begin{cases}
k/2k & n \text{ even}\\
{}_2k & n \text{ odd}
\end{cases}
  \end{aligned}
  \end{alignat*}
where ${}_2k$ denotes the $2$-torsion of $k$. 
\end{prop}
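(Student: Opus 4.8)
The plan is to compute $HR_{\star}^{\pm}(k)$ directly from the biresolution of Proposition~\ref{bires-prop}, following the strategy of the proofs of Propositions~\ref{hyperhom-prop} and~\ref{char-zero-prop}. By definition $HR_{\star}^{\pm}(k)$ is the homology of the total complex of the bicomplex $C_{\star,\star}\otimes_{\Delta R^{op}}\mathcal{L}^{\pm}(k)$. First I would identify its entries: since $C_{p,q}=k[\mathrm{Hom}_{\Delta R}([q],-)]$ is a linearised representable, the co-Yoneda (density) formula gives $C_{p,q}\otimes_{\Delta R^{op}}\mathcal{L}^{\pm}(k)\cong \mathcal{L}^{\pm}(k)([q])=k\otimes k^{\otimes q}\cong k$ for every $p,q\geqslant 0$. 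Under this identification each column (fixed $p$, varying $q$) is the Hochschild complex of $k$, whose differential $b=\sum_{i=0}^{q}(-1)^i\partial_i$ is an alternating sum of identity maps, hence equals $0$ for $q$ odd and $\mathrm{id}$ for $q$ even; its homology is therefore $HH_{\star}(k)=k$ concentrated in degree $0$.

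I would then run the spectral sequence that takes vertical (Hochschild) homology first. Because $HH_{q}(k)$ vanishes for $q>0$, the $E^1$-page is concentrated in the single row $q=0$, where $E^1_{p,0}\cong\mathcal{L}^{\pm}(k)([0])=k$, so the spectral sequence collapses and $HR_n^{\pm}(k)$ is the homology of the induced horizontal differential on this row. Since $0\equiv 0\,(\mathrm{mod}\,4)$, Definition~\ref{bires-defn} gives horizontal maps $d_p=1-r_0$ for $p$ odd and $d_p=1+r_0$ for $p$ even, so the surviving row is exactly the complex obtained by tensoring the periodic free resolution of $k$ over $k[C_2]$ with $\mathcal{L}^{\pm}(k)([0])$. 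Hence $HR_{\star}^{\pm}(k)\cong H_{\star}\left(C_2,\mathcal{L}^{\pm}(k)([0])\right)$.

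It then remains to pin down the $C_2$-module $\mathcal{L}^{\pm}(k)([0])=M=k$ and compute. By Definition~\ref{loday-defn}, $r_0$ acts by $m\mapsto\overline{m}=m$ in the $+$ case and by $m\mapsto-\overline{m}=-m$ in the $-$ case, so the coefficient module is the trivial module $k$ for $HR^{+}$ and the sign module $k_{\mathrm{sgn}}$ for $HR^{-}$. Feeding these into the complex $\cdots\to k\xrightarrow{1+r_0}k\xrightarrow{1-r_0}k$ yields, in the $+$ case, differentials $0,2,0,2,\dots$ and hence $H_0=k$, $H_n=k/2k$ for $n$ odd, $H_n={}_2k$ for $n>0$ even; in the $-$ case it yields differentials $2,0,2,0,\dots$ and hence $H_n=k/2k$ for $n$ even, $H_n={}_2k$ for $n$ odd. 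These are precisely the asserted groups.

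The computation is essentially routine; the points that require care are the co-Yoneda identification of the bicomplex entries, the collapse (which hinges on $HH_{\star}(k)$ being concentrated in degree $0$, so that only $q=0$ contributes), and above all the correct sign with which $r_0$ acts on the degree-zero coefficient module: it is this sign that distinguishes the trivial from the sign $C_2$-representation and so accounts for the difference between $HR^{+}$ and $HR^{-}$. One could alternatively bypass the spectral sequence entirely by observing, via Proposition~\ref{hyperhom-prop}, that the Hochschild complex of $k$ is $C_2$-equivariantly quasi-isomorphic to $k$ (respectively $k_{\mathrm{sgn}}$) concentrated in degree zero, whence $HR_{\star}^{\pm}(k)$ is the hyperhomology $H_{\star}(C_2,k)$ (respectively $H_{\star}(C_2,k_{\mathrm{sgn}})$).
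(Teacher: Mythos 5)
Your proof is correct and takes essentially the same approach as the paper's: both evaluate the bicomplex $C_{\star,\star}\otimes_{\Delta R^{op}}\mathcal{L}^{\pm}(k)$, take vertical (Hochschild) homology first so that only row zero survives, and then read off the homology of the periodic complex whose differentials alternate between $0$ and $2$ (respectively $2$ and $0$). The only difference is that you make explicit the co-Yoneda identification of the entries, the spectral-sequence collapse, and the sign of the $r_0$-action distinguishing the $+$ and $-$ cases, details the paper leaves implicit.
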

\begin{proof}
We will prove the result for $HR_n^{+}(k)$. The result for $HR_n^{-}(k)$ is similar.

Consider the bicomplex $C_{\star , \star}\otimes_{\Delta R^{op}} \mathcal{L}^{+}(k)$. Since the Hochschild homology of $k$ is isomorphic to $k$ concentrated in degree zero \cite[1.1.6]{Lod}, taking the vertical homology yields the complex
\[ 0\leftarrow k \xleftarrow{0} k \xleftarrow{2} k \xleftarrow{0} k \xleftarrow{2} k \xleftarrow{0} \cdots\]
in row zero. The result now follows by taking homology of this complex.
\end{proof} 

\begin{cor}
If $2$ is invertible in the ground ring then $HR_{\star}^{+}(k)$ is isomorphic to $k$ concentrated in degree zero and $HR_{\star}^{-}(k)$ is zero in all degrees.
\end{cor}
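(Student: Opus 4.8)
The plan is to derive the corollary as a direct consequence of the explicit computations in Proposition~\ref{degree-zero-prop}, simply by specializing the case-analysis there to a ground ring in which $2$ is a unit. The corollary is a statement about $HR_\star^\pm(k)$, the reflexive homology of the ground ring itself (with trivial involution), so the two preceding propositions already furnish everything needed; the only work is to read off what the torsion groups become when $2$ is invertible.

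First I would recall the formulas from Proposition~\ref{degree-zero-prop}. For $HR_\star^+(k)$ the answer is $k$ in degree $0$, $k/2k$ in odd degrees, and ${}_2 k$ (the $2$-torsion of $k$) in positive even degrees; for $HR_\star^-(k)$ it is $k/2k$ in even degrees and ${}_2 k$ in odd degrees. The key observation is that both of the ``interesting'' summands are controlled by multiplication by $2$: if $2$ is invertible in $k$, then the map $k \xrightarrow{\,2\,} k$ is an isomorphism, so its cokernel $k/2k$ vanishes and its kernel ${}_2 k$ vanishes as well. Concretely, I would point to the complex
\[
0 \leftarrow k \xleftarrow{\,0\,} k \xleftarrow{\,2\,} k \xleftarrow{\,0\,} k \xleftarrow{\,2\,} k \leftarrow \cdots
\]
appearing in the proof of Proposition~\ref{degree-zero-prop}: when $2$ is a unit the maps labelled $2$ are isomorphisms, so the homology collapses to a single copy of $k$ in degree zero (coming from the cokernel of the initial $0$-map into $k$) for the $+$ case, and to nothing at all for the $-$ case (where the degree-zero term is also a cokernel of a $2$-map, hence zero).

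Therefore the two statements follow immediately: in the $+$ case, $HR_0^+(k) \cong k$ while every higher group is $k/2k = 0$ or ${}_2 k = 0$, so $HR_\star^+(k)$ is concentrated in degree zero as a copy of $k$; in the $-$ case every group is either $k/2k = 0$ or ${}_2 k = 0$, so $HR_\star^-(k)$ vanishes in all degrees. There is no genuine obstacle here---the content is entirely contained in Proposition~\ref{degree-zero-prop}, and the corollary is essentially a trivialization remark. The only point requiring a moment's care is the bookkeeping of which degrees carry $k/2k$ versus ${}_2 k$, but since both vanish under the hypothesis, even this bookkeeping is unnecessary for the conclusion; I would simply invoke the proposition and note that invertibility of $2$ kills all the torsion and cotorsion summands simultaneously.
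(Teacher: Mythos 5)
Your proposal is correct and matches the paper's own proof: both simply invoke Proposition~\ref{degree-zero-prop} and observe that invertibility of $2$ forces $k/2k$ and ${}_2k$ to vanish. The extra remarks about the explicit complex are harmless but unnecessary, exactly as you note.
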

\begin{proof}
If $2$ is invertible in $k$ then the quotient module $k/2k$ and the $2$-torsion ${}_2k$ are both zero and the result follows from Proposition \ref{degree-zero-prop}.
\end{proof}

\begin{rem}
Recall from Remark \ref{LES-rem} that when we work over a field of characteristic zero, the direct sum of $HR_{\star}^{+}$ and $HR_{\star}^{-}$ is isomorphic to Hochschild homology, $HH_{\star}$. We note that our calculations for the ground ring $k$ agree with this. When $k$ is a field of characteristic zero we see that $HR_{\star}^{+}(k)\oplus HR_{\star}^{-}(k)$ is isomorphic to $k$ concentrated in degree zero, which is isomorphic to $HH_{\star}(k)$ \cite[1.1.6]{Lod}.
\end{rem}

\begin{thm}
Let $A$ be a commutative $k$-algebra.
\begin{itemize}
\item If $A$ has the trivial involution then $HR_0^{+}(A)\cong A$ and $HR_0^{-}(A)\cong A/2A$.
\item If $A$ has a non-trivial involution then $HR_0^{+}(A)$ is isomorphic to the coinvariants of $A$ under the involution.
\end{itemize}
\end{thm}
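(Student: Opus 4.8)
The plan is to compute $HR_0^{\pm}(A)$ directly as the degree-zero homology of the total complex of the bicomplex $C_{\star,\star}\otimes_{\Delta R^{op}}\mathcal{L}^{\pm}(A)$ of Section \ref{bires-sec}, proceeding exactly as in Proposition \ref{degree-zero-prop} but now retaining the involution on $A$. Since $HR_0$ is the cokernel of the map from total degree one to total degree zero, only the bottom corner of the bicomplex is relevant.

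First I would identify the relevant terms concretely. As $C_{p,q}=k[\mathrm{Hom}_{\Delta R}([q],-)]$ is the free $k$-module on a representable functor, the co-Yoneda (density) formula gives a natural isomorphism
\[C_{p,q}\otimes_{\Delta R^{op}}\mathcal{L}^{\pm}(A)\cong \mathcal{L}^{\pm}(A)([q])=A^{\otimes(q+1)},\]
under which the maps induced by pre-composition with $\delta_i$ and with $r_q$ become the structure maps $\partial_i$ and the reflexive operator $\mathcal{L}^{\pm}(r_q)$ of the functor, respectively. In total degrees zero and one the total complex is then
\[A^{\otimes 2}\oplus A\longrightarrow A,\]
where the summand $A^{\otimes 2}$ comes from $C_{0,1}$ (with the vertical differential $b$) and the summand $A$ from $C_{1,0}$ (with the horizontal differential $d$), so that $HR_0^{\pm}(A)$ is the cokernel of this single map.

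Next I would compute the two components of the differential. The vertical component is $b=\partial_0-\partial_1$, sending $a_0\otimes a_1\mapsto a_0a_1-a_1a_0$; because $A$ is commutative this vanishes and contributes nothing to the cokernel. The horizontal component is $d=1-r_0$, and on $\mathcal{L}^{+}(A)([0])=A$ the operator $r_0$ is the involution $a\mapsto\ol{a}$, while on $\mathcal{L}^{-}(A)([0])=A$ it is $a\mapsto-\ol{a}$. This yields
\[HR_0^{+}(A)\cong A/\{a-\ol{a}:a\in A\}\quad\text{and}\quad HR_0^{-}(A)\cong A/\{a+\ol{a}:a\in A\}.\]
Specialising, a trivial involution gives $\{a-\ol{a}\}=0$ and $\{a+\ol{a}\}=2A$, hence $HR_0^{+}(A)\cong A$ and $HR_0^{-}(A)\cong A/2A$; for an arbitrary involution the quotient $A/\{a-\ol{a}\}$ is precisely the module of coinvariants, giving the second bullet.

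The computation is essentially routine, so the only delicate point is bookkeeping: correctly pinning down the induced $C_2$-action $r_0$ on the bottom term $A$ and, in particular, the sign that distinguishes $\mathcal{L}^{+}$ from $\mathcal{L}^{-}$, together with confirming that $b$ vanishes by commutativity. Once these are in hand the two bullets follow immediately. (Alternatively, one could obtain the same answer from Proposition \ref{hyperhom-prop} by observing that in total degree zero the hyperhomology reduces to $H_0(C_2;HH_0(A))=H_0(C_2;A)$ for the relevant action, but the bicomplex argument makes the action and the sign entirely transparent.)
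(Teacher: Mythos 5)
Your proposal is correct and follows exactly the route the paper intends: the paper's proof is the one-line remark that the results can be deduced directly from the bicomplex $C_{\star,\star}\otimes_{\Delta R^{op}}\mathcal{L}^{\pm}(A)$, and your argument simply carries out that computation in detail (co-Yoneda identification of the corner terms, vanishing of $b=\partial_0-\partial_1$ by commutativity, and the cokernel of $d=1-r_0$ giving $A/\{a-\ol{a}\}$ resp.\ $A/\{a+\ol{a}\}$). The bookkeeping of the sign distinguishing $\mathcal{L}^{+}$ from $\mathcal{L}^{-}$ and of the case $(p,q)=(1,0)$ of the horizontal differential is exactly right.
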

\begin{proof}
One can deduce these results directly from the bicomplex that computes reflexive homology.
\end{proof}

\section{Reflexive homology of a tensor algebra}
\label{tensor-alg-sec}
In this section we calculate the reflexive homology of a tensor algebra. Let $M$ be a $k$-module and consider a module automorphism $m\mapsto \overline{m}$ which squares to the identity. The identity automorphism is an example of such.

Consider the tensor algebra
\[TM = \bigoplus_{n=0}^{\infty} M^{\otimes n}\]
where $M^{\otimes 0}=k$. The product is given by concatenation, see \cite[A.1]{Lod} for instance. The tensor algebra has an involution determined on the $n^{th}$ summand by
\[r\left(m_1\otimes \cdots \otimes m_n\right) = \left(\overline{m_n} \otimes \overline{m_{n-1}}\otimes \cdots \otimes \overline{m_2}\otimes \ol{m_1}\right).\]
Note that the involution on $k$ is trivial.

Taking $A=TM$ in the definition of the Loday functor $\mathcal{L}^{+}(A)$, we obtain an induced involution on $\mathcal{L}^{+}(A)([n])=(TM)^{\otimes {n+1}}$ given by
\[r(a_0\otimes\cdots \otimes a_n) = \ol{a_0}\otimes \ol{a_n}\otimes \cdots \otimes \ol{a_1},\]
where each $A_i \in TM$.

\begin{thm}
\label{tensor-alg-thm}
Let $M$ be a $k$-module with a module automorphism $m\mapsto \overline{m}$ which squares to the identity. There is a natural isomorphism of $k$-modules 
\[HR_n^{+}(TM) \cong 
\begin{cases}
H_0\left(C_2, HH_0(TM)\right) & n=0\\
H_{n-1}\left(C_2, HH_1(TM)\right)\oplus H_n\left(C_2, HH_0(TM)\right) & n\geqslant 1.
\end{cases}
\]
\end{thm}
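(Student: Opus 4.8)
The plan is to exploit Proposition \ref{hyperhom-prop}, which identifies reflexive homology $HR_\star^+(TM)$ with the hyperhomology of $C_2$ acting on the Hochschild complex $C_\star(TM)$. The key structural input is that the Hochschild homology of a tensor algebra is concentrated in degrees $0$ and $1$: by Loday--Quillen \cite{LQ}, $HH_n(TM)$ vanishes for $n \geq 2$. This vanishing is exactly what collapses the hyperhomology spectral sequence and produces the two-term answer in the statement.

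**First I would** set up the hyperhomology spectral sequence arising from the bicomplex $C_{\star,\star} \otimes_{\Delta R^{op}} \mathcal{L}^+(TM)$ and take vertical homology first. By Proposition \ref{hyperhom-prop} the $E^1$-page (vertical homology) in the column corresponding to Hochschild degree $q$ is $HH_q(TM)$, regarded as a $k[C_2]$-module via the involution $r$. Since $HH_q(TM) = 0$ for $q \geq 2$, only the columns $q=0$ and $q=1$ survive. The remaining computation is group homology of $C_2$ with coefficients in these two modules: row $p$, column $q$ of the $E^2$-page is $H_p(C_2, HH_q(TM))$. Thus the $E^2$-page is supported on the two rows $q=0$ and $q=1$, and the total degree-$n$ piece receives contributions $H_n(C_2, HH_0(TM))$ from the bottom row and $H_{n-1}(C_2, HH_1(TM))$ from the top row.

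**The main obstacle** is ruling out differentials and extension problems between these two surviving rows so that the answer is a genuine direct sum rather than merely an associated graded. Because the spectral sequence is concentrated in exactly two adjacent rows $q \in \{0,1\}$, the only potentially nonzero differentials on later pages would run between them; I would argue these vanish for degree reasons (a $d_r$ differential on the two-row spectral sequence has the wrong bidegree once $r \geq 2$, since it must change $q$ by at least two), so the sequence degenerates at $E^2$. For the extension problem I expect one can either invoke the naturality of the Loday--Quillen splitting of the Hochschild complex of $TM$ — which should yield a $C_2$-equivariant splitting compatible with the involution, so that the filtration on $HR_n^+(TM)$ splits — or argue directly that the $q=0$ row, being the image of the low-degree part, splits off as a direct summand. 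The $n=0$ case is handled separately and trivially, since there $HH_1$ contributes nothing and only $H_0(C_2, HH_0(TM))$ remains.

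**Finally I would** confirm that the involution $r$ induced on $HH_\star(TM)$ via $\mathcal{L}^+$ is precisely the one under which the group-homology coefficients are being computed, so that the identification is natural in $M$; this naturality is what upgrades the isomorphism to the stated natural isomorphism of $k$-modules.
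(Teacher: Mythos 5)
Your setup coincides with the paper's: both consider the bicomplex $C_{\star,\star}\otimes_{\Delta R^{op}}\mathcal{L}^{+}(TM)$, take vertical (Hochschild) homology first, and use the Loday--Quillen computation \cite{LQ} to concentrate the $E^1$-page in the two rows $q=0,1$. The genuine gap is your degeneration step. In this spectral sequence $d_r$ has bidegree $(-r,r-1)$, so $d_2\colon E^2_{p,0}\to E^2_{p-2,1}$ changes $q$ by exactly one, not ``at least two''; it runs precisely from the bottom surviving row to the top one. A first-quadrant spectral sequence concentrated in two adjacent rows does \emph{not} collapse at $E^2$ for degree reasons --- this is exactly the configuration underlying Wang- and Gysin-type long exact sequences, where $d_2$ is typically nonzero. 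Without an argument killing $d_2$, what your proposal yields is only the long exact sequence
\[\cdots \to H_{n+1}\left(C_2, HH_0(TM)\right)\xrightarrow{\ d_2\ } H_{n-1}\left(C_2, HH_1(TM)\right)\to HR_n^{+}(TM)\to H_n\left(C_2, HH_0(TM)\right)\xrightarrow{\ d_2\ } \cdots\]
and not the asserted direct sum. The paper closes exactly this gap by invoking an argument similar to \cite[Lemma 2.3.2]{Lodder1} to show that the $E^2$-differentials vanish; that argument in turn uses the explicit description of the involution on the two rows (in particular the extra sign on $HH_1$, obtained as in \cite[Lemma 2.2.1]{Lodder1}), which your proposal mentions only as a final naturality check rather than as input to the collapse.

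Your handling of the extension problem, on the other hand, is sound and in fact more explicit than the paper's, which simply reads the direct sum off the degenerate $E^2$-page: with two adjacent rows the filtration gives a short exact sequence $0\to E^\infty_{n-1,1}\to HR_n^{+}(TM)\to E^\infty_{n,0}\to 0$, and a $C_2$-equivariant splitting of the kind you suggest is a legitimate way to finish. But the collapse of the spectral sequence is the key missing idea, and the ``wrong bidegree once $r\geq 2$'' claim is false precisely at $r=2$.
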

\begin{proof}
Consider the reflexive bicomplex for $TM$. By taking the vertical homology we obtain the $E^1$-page of a spectral sequence whose entries are the Hochschild homology groups of $TM$ with horizontal differentials induced from $1\pm r$.

Loday and Quillen \cite[Lemma 5.2]{LQ} have calculated the Hochschild homology of a tensor algebra as follows:
\[HH_n(TM) \cong
\begin{cases}
\bigoplus_{q\geqslant 0} \frac{M^{\otimes q}}{\llangle 1-t\rrangle} & n=0\\
\bigoplus_{q \geqslant 1} \left(M^{\otimes q}\right)^t & n=1\\
0 & \text{otherwise},
\end{cases}
\]
where $\left(M^{\otimes q}\right)^t$ are the invariants of the action of the cyclic group $C_q$ on $M^{\otimes q}$ and $M^{\otimes q}/\llangle 1-t\rrangle$ are the coinvariants.

Therefore the $E^1$-page of our spectral sequence is concentrated in rows zero and one as follows:
\begin{center}
\begin{tikzcd}
HH_1(TM)  & HH_1(TM)\arrow[l, "1+r",swap] & HH_1(TM)\arrow[l, "1-r",swap] & HH_1(TM)\arrow[l, "1+r",swap]& \cdots \arrow[l, "1-r",swap]\\
HH_0(TM)  & HH_0(TM)\arrow[l, "1-r", swap] & HH_0(TM)\arrow[l, "1+r",swap] & HH_0(TM)\arrow[l, "1-r", swap]& \cdots \arrow[l, "1+r",swap]
\end{tikzcd}
\end{center}
An argument similar to \cite[Lemma 2.2.1]{Lodder1} tells us that in row zero we have
\[r\left(m_1\otimes \cdots \otimes m_q\right) = \left(\overline{m_1} \otimes \overline{m_q}\otimes \cdots \otimes \overline{m_2}\right)\]
and in row one we have 
\[r\left(m_1\otimes \cdots \otimes m_q\right) = -\left(\ol{m_1}\otimes \ol{m_q}\otimes \cdots \otimes \ol{m_2}\right).\]
Furthermore, an argument similar to \cite[Lemma 2.3.2]{Lodder1} tells us that the differentials on the $E^2$-page are zero and so the spectral sequence collapses.

Taking homology on the $E^1$-page yields
\[E_{p,0}^2\cong H_p\left(C_2, HH_0(TM)\right) \quad \text{and} \quad E_{p,1}^2\cong H_p\left(C_2, HH_1(TM)\right)\]
with all other $E_{p,q}^2=0$, from which we can read off the result.
\end{proof}

\begin{rem}
\label{grading-rem}
We deduce from the theorem that the reflexive homology of a tensor algebra has a grading induced from the grading on the Hochschild homology. In homological degree zero, for $q\geqslant 0$, we have
\[HR_0^{+}\left(TM\right)_q = H_0\left(C_2,\frac{M^{\otimes q}}{\llangle 1-t\rrangle}\right).\]
Note that when $q=0$ we have
\[HR_0^{+}\left(TM\right)_0=H_0\left(C_2,k\right)\cong H_{0}(BC_2,k) \cong HR_{0}^{+}(k)\]
and when $q=1$ we have
\[HR_0^{+}\left(TM\right)_1=H_0\left(C_2,M\right).\]
In homological degree $p$ we have
\[HR_p^{+}(TM)_0 = H_p\left(C_2 , k\right) \cong H_p(BC_2,k) \cong HR_p^{+}(k)\]
and 
\[HR_p^{+}(TM)_q = H_p\left(C_2,\frac{M^{\otimes q}}{\llangle 1-t\rrangle}\right) \oplus H_{p-1}\left(C_2 , \left(M^{\otimes q}\right)^t\right)\]
for $q\geqslant 1$.

We note that, as mentioned above, our grading on the reflexive homology of a tensor algebra is induced from the grading on the Hochschild homology of a tensor algebra. For cyclic homology \cite[Proposition 5.4]{LQ} and dihedral homology \cite[Theorem 2.1.1]{Lodder1} there is a grading induced directly from the grading on a tensor algebra. In both of these cases this follows from analysis of the norm map $N$ (see \cite[2.1.0]{Lod} for instance) which we do not have in the reflexive case.
\end{rem}

\section{Reflexive homology of a group algebra}
\label{kg-sec}
In this section we will study the reflexive homology of the group algebra of a discrete group. We will recall simplicial models for the bar construction and for the classifying space on a group. We will show that these extend to reflexive sets. We will show that the reflexive homology of a group algebra is isomorphic to the $C_2$-equivariant homology of the free loop space on $BG$. As a consequence we will deduce the analogous result for dihedral homology, namely that the dihedral homology of a group algebra is isomorphic to the $O(2)$-equivariant homology of the free loop space on $BG$. We will then show that we can decompose the reflexive homology of a group algebra in terms of the conjugacy classes of the group.

These results fit into a broader story of using the homology theories associated to crossed simplicial groups to calculate interesting information about loop spaces. Hochschild homology and cyclic homology of $k[G]$ are known to coincide with the homologies of $\mathcal{L}BG$ and the $S^1$-equivariant Borel construction on  $\mathcal{L}BG$ respectively \cite[7.3.13]{Lod}. The symmetric and hyperoctahedral theories are known to compute the homology and $C_2$-equivariant homology of certain infinite loop spaces on $BG$, see \cite[Corollary 40]{Ault} and \cite[Theorem 8.8]{DMG-hyp}.

The decomposition we provide fits into a bigger picture of decomposing homology theories associated to crossed simplicial groups in the case of a group algebra. Results of this form were proved by Burghelea \cite[Theorem 1]{DB-cyc} for Hochschild and cyclic homology (see also \cite[Theorem 7.4.6]{Lod}) and by Loday \cite[Proposition 4.9]{Lod-dihed} for dihedral homology. 

\subsection{Bar construction and classifying spaces}
We recall some simplicial models for the bar construction \cite[7.3.10]{Lod} and classifying space \cite[B.12]{Lod} of a group and extend them to reflexive sets.

\begin{defn}
Let $G$ be a discrete group. For $n\geqslant 0$, let $\Gamma_nG=G^{n+1}$, the $(n+1)$-fold Cartesian product. The face maps are defined by
\[\partial_i\left(g_0,\dotsc , g_n\right)=\begin{cases}
\left(g_0,\dotsc ,g_{i-1}, g_ig_{i+1}, g_{i+2},\dotsc , g_n\right) & 0\leqslant i \leqslant n-1\\
\left(g_ng_0, g_1, \dotsc , g_{n-1}\right) & i=n.
\end{cases}
\]
The degeneracy maps insert the identity element of $G$ into the tuple. We extend this to a reflexive set by defining
\[r_n\left(g_0,\dotsc , g_n\right)= \left(g_0^{-1}, g_n^{-1},\dotsc , g_1^{-1}\right).\]
\end{defn}

\begin{rem}
\label{dihedral-rem}
Loday has already shown that $\Gamma_{\star}G$ is a cyclic set, by defining 
\[t_n\left(g_0,\dotsc , g_n\right) = \left(g_n, g_0,\dotsc , g_{n-1}\right).\]
One can easily check that the reflexive structure and the cyclic structure are compatible, giving $\Gamma_{\star}G$ the structure of a dihedral set. The fact that the reflexive structure that we have defined is compatible with the cyclic structure of $\Gamma_{\star}G$ is key to proving Theorem \ref{grp-alg-thm}.
\end{rem}

\begin{defn}
Let $B_nG=G^{n}$ for $n\geqslant 0$. The face maps are given by
\[\partial_i\left(g_1,\dotsc , g_n\right) = 
\begin{cases}
\left(g_2, \dotsc , g_n\right) & i=0\\
\left(g_1 , \dotsc , g_ig_{i+1}, \dotsc , g_n\right) & 1\leqslant i \leqslant n\\
\left(g_1,\dotsc , g_{n-1}\right) & i=n.
\end{cases}
\]
The degeneracy maps insert the identity element into the tuple. We can extend this to a reflexive set by defining 
\[r_n\left(g_1,\dotsc , g_n\right) = \left(g_n^{-1},\dotsc , g_1^{-1}\right).\]
\end{defn}

\begin{defn}
Let $G$ be a discrete group. We define the \emph{reflexive homology of $G$} to be
\[HR_{\star}^{+}\left(G,k\right)\coloneqq HR_{\star}^{+}\left(k\left[B_{\ast}G\right]\right).\]
\end{defn}

\begin{rem}
\label{proj-map-rem}
Note that we have a projection map, which is a map of reflexive sets, 
\[p\colon \Gamma_{\star}G \rightarrow B_{\star}G,\]
determined in degree $n$ by $\left(g_0,\dotsc , g_n\right) \mapsto \left(g_1,\dotsc, g_n\right)$.
\end{rem}

\begin{rem}
As for $\Gamma_{\star}G$, $B_{\star}G$ has a cyclic structure. This is given by 
\[t_n\left(g_1,\dotsc ,g_n\right)= \left(\left(g_1\cdots g_n\right)^{-1}, g_1,\dotsc , g_{n-1}\right),\]
as described in \cite[7.3.3]{Lod} (with $z=1$) for example. This is compatible with the reflexive structure on $B_{\star}G$, as described in \cite[2.3]{Dunn}, giving $B_{\star}G$ the structure of a dihedral set.
\end{rem}

\subsection{Reflexive homology of a group algebra}
Let $G$ be a discrete group. In this subsection we prove that the reflexive homology of a group algebra $k[G]$ is isomorphic to the homology of the $C_2$-equivariant Borel construction on the free loop space of $BG$.

\begin{thm}
\label{grp-alg-thm}
Let $G$ be a discrete group. Let $k[G]$ denote its group algebra and let $BG$ be its classifying space. Let $\mathcal{L}BG$ denote the free loop space on $BG$. There is an isomorphism of graded $k$-modules
\[HR_{\star}^{+}\left(k[G]\right) \cong H_{\star}\left(EC_2 \times_{C_2} \mathcal{L}BG , k \right),\]
where the $C_2$-action on $\mathcal{L}BG$ is induced from the reflexive structure of $B_{\star}G$ and reversing the direction of loops.
\end{thm}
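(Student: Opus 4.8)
The plan is to identify the Loday functor $\mathcal{L}^{+}(k[G])$ with the $k$-linearisation of the reflexive set $\Gamma_{\star}G$, and then to invoke the Fiedorowicz--Loday comparison between the homology of a crossed simplicial group and an equivariant Borel construction. First I would establish an isomorphism of functors $\mathcal{L}^{+}(k[G]) \cong k[\Gamma_{\star}G]$ in $\mathrm{Fun}(\Delta R^{op}, \mathbf{Mod}_k)$, where $k[G]$ carries the involution $\ol{g}=g^{-1}$ (an anti-homomorphism of order two) and is regarded as an involutive bimodule over itself. On objects both functors send $[n]$ to $k[G^{n+1}]$, and a direct check matches the face and degeneracy maps: in particular the last face map $a_nm\otimes a_1\otimes \cdots \otimes a_{n-1}$ of the Loday functor corresponds to $(g_ng_0, g_1,\dotsc, g_{n-1})$, as for $\Gamma_{\star}G$. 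The reflexive operator of $\mathcal{L}^{+}(k[G])$ is $r_n(g_0\otimes \cdots \otimes g_n)=\ol{g_0}\otimes \ol{g_n}\otimes \cdots \otimes \ol{g_1}=(g_0^{-1}, g_n^{-1},\dotsc, g_1^{-1})$, which is exactly the reflexive structure placed on $\Gamma_{\star}G$. Hence $HR_{\star}^{+}(k[G]) \cong HR_{\star}^{+}(k[\Gamma_{\star}G])$.

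Next I would apply the Fiedorowicz--Loday comparison \cite[Corollary 6.13]{FL}, exactly as in the computation of $HR_{\star}^{+}(k)$ above, to obtain $HR_{\star}^{+}(k[\Gamma_{\star}G]) \cong H_{\star}(EC_2 \times_{C_2} \llv \Gamma_{\star}G\rrv, k)$, where $C_2$ acts on the geometric realisation through the reflexive operators. It then remains to identify $\llv \Gamma_{\star}G\rrv$, as a $C_2$-space, with $\mathcal{L}BG$ equipped with the stated action. The underlying cyclic set of $\Gamma_{\star}G$ is the cyclic nerve of $G$, whose realisation is $S^1$-equivariantly $\mathcal{L}BG$ with $S^1$ acting by loop rotation \cite[7.3.13]{Lod}. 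Since the reflexive and cyclic structures on $\Gamma_{\star}G$ are compatible (Remark \ref{dihedral-rem}), $\llv \Gamma_{\star}G\rrv$ is an $O(2)$-space and the reflexive operators realise to the reflection subgroup $C_2\subset O(2)$ (cf.\ \cite{Dunn}); under the identification with $\mathcal{L}BG$ this reflection acts by reversing the direction of loops together with the involution on $BG$ induced by the reflexive structure of $B_{\star}G$. Combining these identifications yields the desired isomorphism.

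The main obstacle will be this final identification of $C_2$-spaces: one must check that the standard equivalence $\llv \Gamma_{\star}G\rrv \simeq \mathcal{L}BG$ can be promoted to a $C_2$-equivariant equivalence for the action given by reversing loops composed with the $BG$-involution. This amounts to controlling the geometric realisation of the reflexive (equivalently dihedral) structure rather than merely its underlying simplicial set. Here the compatibility of the projection $p$ of Remark \ref{proj-map-rem} with the involutions on $\Gamma_{\star}G$ and $B_{\star}G$ is decisive: since $p$ realises to the evaluation map $\mathcal{L}BG \to BG$ and is a map of reflexive sets, it forces the target part of the $C_2$-action to be the self-map $B(\mathrm{inv})$ of $BG$ induced by inversion on $G$, while the reflection on the circle supplies the loop-reversal. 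Verifying this equivariance carefully, and confirming that the resulting isomorphism is natural in $G$, is the technical heart of the argument.
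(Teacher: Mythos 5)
Your proposal is correct and follows essentially the same route as the paper's proof: identify $\mathcal{L}^{+}(k[G])$ with $k[\Gamma_{\star}G]$, apply Fiedorowicz--Loday \cite[6.13]{FL} to obtain the Borel construction on $\llv \Gamma_{\star}G\rrv$, and promote Loday's equivalence $\llv \Gamma_{\star}G\rrv \simeq \mathcal{L}BG$ to a $C_2$-equivariant one using the compatibility of the reflexive and cyclic structures and the projection $p$. The equivariance check you rightly flag as the technical heart is handled in the paper exactly by the mechanism you sketch: one verifies that the map $S^1 \times \llv \Gamma_{\star}G\rrv \rightarrow \llv B_{\star}G\rrv$ (with $C_2$ acting on $S^1$ via inversion in the cyclic groups and flipping simplices) is $C_2$-equivariant, and then passes through the adjunction between $S^1\times -$ and $\mathcal{L}(-)$ to conclude that $\gamma$ is a $C_2$-equivariant homotopy equivalence.
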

\begin{proof}
In \cite[7.3.11]{Lod}, Loday shows that there is a homotopy equivalence $\gamma \colon \llv \Gamma_{\star}G\rrv \rightarrow \mathcal{L}BG$. This is done by considering the adjoint functors $S^1\times -$ and $\mathcal{L}(-)$. In particular, there is a map
\[S^1 \times \llv \Gamma_{\star}G\rrv \rightarrow \llv  \Gamma_{\star}G\rrv  \rightarrow \llv B_{\star}G\rrv\]
where the first map is induced from the cyclic structure of $\Gamma_{\star}G$, as described in \cite[Section 5]{FL}, and the second is induced from the projection map $p\colon \Gamma_{\star}G \rightarrow B_{\star}G$ described in Remark \ref{proj-map-rem}. The adjoint map is $\gamma$, the necessary homotopy equivalence. 

We extend this to a $C_2$-equivariant homotopy equivalence. As noted in Remark \ref{dihedral-rem}, the reflexive structure of $\Gamma_{\star}G$ is compatible with the cyclic structure. Explicitly, the circle $S^1$ is the geometric realization of the cyclic crossed simplicial group, $\llb C_n \rrb$, by \cite[6.3.6]{Lod}. The level-wise involution determined by sending the generator of the cyclic group to its inverse induces a $C_2$-action on the geometric realization. Similarly, the reflexive structures on $\Gamma_{\star}G$ and $B_{\star}G$ induce $C_2$-actions on the realizations. Note that the involution on geometric realizations also flips the simplex. One easily checks that with these definitions the map
\[S^1 \times \llv \Gamma_{\star}G\rrv   \rightarrow \llv B_{\star}G\rrv\]
is $C_2$-equivariant. Under the adjunction between $S^1 \times -$ and $\mathcal{L}(-)$ this yields that
\[\gamma \colon \llv \Gamma_{\star}G\rrv \rightarrow \mathcal{L}BG\]
is a $C_2$-equivariant map, with the involution on $\mathcal{L}BG$ given by applying the involution on $BG$ and reversing the direction of loops. Combining this with \cite[7.3.11]{Lod}, we see that $\gamma$ is a $C_2$-equivariant homotopy equivalence.

Using \cite[7.3.13]{Lod} and \cite[6.13]{FL}, we have
\[HR_{\star}^{+}\left(k[G]\right) \cong H_{\star}\left(EC_2 \times_{C_2} \llv\Gamma_{\star}G\rrv \right) \cong H_{\star}\left(EC_2\times_{C_2} \mathcal{L}BG\right)\]
as required.
\end{proof}

Using the fact that the reflexive structure that we have described is compatible with the cyclic structure of \cite[7.3.11]{Lod} we can deduce the analogous result for dihedral homology.

\begin{thm}
Let $G$ be a discrete group. Let $k[G]$ denote its group algebra and let $BG$ be its classifying space. Let $\mathcal{L}BG$ denote the free loop space on $BG$. There is an isomorphism of graded $k$-modules
\[HD_{\star}^{+}\left(k[G]\right) \cong H_{\star}\left(EO(2) \times_{O(2)} \mathcal{L}BG , k \right),\]
where the $O(2)$-action on $\mathcal{L}BG$ is induced from the dihedral structure of $B_{\star}G$ and the action of $O(2)$ on loops.
\end{thm}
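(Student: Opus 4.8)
The plan is to bootstrap from Theorem \ref{grp-alg-thm} together with Loday's cyclic computation, exploiting the fact that the dihedral crossed simplicial group has geometric realization $O(2) = S^1 \rtimes C_2$, generated by its rotation subgroup $S^1$ and the reflection $C_2$.

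First I would apply the Fiedorowicz--Loday machinery \cite[6.13]{FL} to the dihedral crossed simplicial group, exactly as in the proof of Theorem \ref{grp-alg-thm} but one level up. Since $\mathcal{L}^+(k[G])([n]) = k[G]^{\otimes(n+1)} = k[\Gamma_\star G]([n])$ and $\Gamma_\star G$ is a dihedral set (Remark \ref{dihedral-rem}), and since the realization of the dihedral crossed simplicial group is $O(2)$, this should give a natural isomorphism
\[HD_\star^+(k[G]) \cong H_\star\!\left(EO(2) \times_{O(2)} \lvert\Gamma_\star G\rvert\right).\]

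The heart of the argument is to promote Loday's homotopy equivalence $\gamma \colon \lvert\Gamma_\star G\rvert \to \mathcal{L}BG$ to an $O(2)$-equivariant map. I would assemble this from two equivariances already established: $\gamma$ is $S^1$-equivariant by \cite[7.3.11]{Lod}, where $S^1$ rotates loops, and $\gamma$ is $C_2$-equivariant by the proof of Theorem \ref{grp-alg-thm}, where the reflection reverses the direction of loops and applies the involution on $BG$. Because the dihedral structure on $\Gamma_\star G$ is generated by its compatible cyclic and reflexive structures (Remark \ref{dihedral-rem}), the realization $\lvert\Gamma_\star G\rvert$ carries a genuine $O(2)$-action extending both, and the semidirect-product relation $rtr^{-1} = t^{-1}$ holds on $\mathcal{L}BG$ as well, since reflecting a loop, rotating, and reflecting back reverses the rotation. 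As $S^1$ and the reflection generate $O(2)$ and $\gamma$ intertwines each, I would conclude that $\gamma$ is an $O(2)$-equivariant homotopy equivalence, whence
\[H_\star\!\left(EO(2) \times_{O(2)} \lvert\Gamma_\star G\rvert\right) \cong H_\star\!\left(EO(2) \times_{O(2)} \mathcal{L}BG\right).\]

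The hard part will be verifying this $O(2)$-equivariance rigorously: confirming that the separately established $S^1$- and $C_2$-equivariances are genuinely compatible with the semidirect-product relations on both source and target, so that they assemble into a single $O(2)$-action rather than two unrelated symmetries. Everything else is a formal concatenation of the two isomorphisms above.
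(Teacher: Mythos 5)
Your proposal is correct and follows essentially the same route as the paper: the paper's proof likewise observes that the reflexive structures on $\Gamma_{\star}G$ and $B_{\star}G$ from Theorem \ref{grp-alg-thm} are compatible with Loday's cyclic structures, deduces that $\gamma$ is $O(2)$-equivariant, and concludes via the Fiedorowicz--Loday identification of dihedral homology with the $O(2)$-equivariant Borel construction on $\lvert\Gamma_{\star}G\rvert$. Your write-up simply makes explicit (the semidirect-product compatibility, the assembly of the $S^1$- and $C_2$-equivariances) what the paper's terse proof leaves implicit.
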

\begin{proof}
Since the reflexive structure of $\Gamma_{\star}G$ and $B_{\star}G$ used in Theorem \ref{grp-alg-thm} is compatible with the cyclic structures used in \cite[7.3.11]{Lod}, we can deduce that $\gamma$ is an $O(2)$-equivariant map, from which the result follows.
\end{proof}

\subsection{Decomposition by conjugacy classes}
In this subsection we prove that the reflexive homology of the group algebra of a discrete group can be decomposed into a direct sum indexed by the conjugacy classes of the group, where the summands are defined in terms of a reflexive analogue of group homology. We prove that under nice conditions, for example when the group is abelian, we can give a simple description of the summands.

\begin{defn}
Let $G$ be a discrete group.
\begin{itemize}
\item Let $\overline{k[G]}$ denote $k[G]$ considered as a right $k[G]$-module with the action $h\ast g = g^{-1}hg$.
\item Let $\llangle G\rrangle$ denote the set of conjugacy classes of $G$. We will choose a representative $z$ in each class and denote the class by $\llangle z \rrangle$.
\item For $z\in G$ we denote by $G_z$ the centralizer of $z$ in $G$, that is, $G_z=\llb g\in G : gz=zg\rrb$.
\end{itemize}
\end{defn}

\begin{defn}
For a group $G$ and a right $k[G]$-module $M$ we denote by $C_{\star}(G,M)$ the \emph{Eilenberg-Mac Lane complex} \cite[C.2, C.3]{Lod}. We will denote an element in degree $n$ by $h\otimes \left[g_1, \dotsc ,g_n\right]$.
\end{defn}

In order to prove our decomposition of the reflexive homology of a group algebra we need to recall certain reflexive sets. The involutions defined here can be found in the proof of \cite[Proposition 4.9]{Lod-dihed}.

\begin{defn}
\label{action1}
Let $G$ be a discrete group.
\begin{itemize}
\item We define the action of $r_{n}$ on $k[G]^{\otimes n+1}$ to be determined by 
\[ r_{n}\left(g_0,\dotsc , g_n\right) = (-1)^{n(n+1)/2} \left(g_0^{-1}, g_n^{-1}, \dotsc , g_1^{-1}\right).\]
\item Let $M=\overline{k[G]}$. The action of $r_n$ on $C_{n}\left(G,M\right)$ is given by
\[ r_{n} \left(h \otimes \left[g_1,\dotsc , g_n\right]\right) = (-1)^{n(n+1)/2}\left( g^{-1} h^{-1} g \otimes \left[g_n^{-1}, \dotsc , g_1^{-1}\right]\right)\]
where $g=g_1\cdots g_n$.
\item We define the action of $r_n$ on 
\[\bigoplus_{\llangle z \rrangle \in \llangle G\rrangle} C_n\left(G, k\left[ G/G_z\right]\right)\]
by
\[ r_{n} \left(h \otimes \left[g_1,\dotsc , g_n\right]\right) = (-1)^{n(n+1)/2}\left( g^{-1} h^{-1} g \otimes \left[g_n^{-1}, \dotsc , g_1^{-1}\right]\right)\]
where $g=g_1\cdots g_n$.
\end{itemize}
\end{defn}

\begin{thm}
\label{kg-thm}
Let $G$ be a discrete group. For each $n\geqslant 0$ there is an isomorphism of $k$-modules
\[HR_n^{+}\left(k[G]\right) \cong \bigoplus_{\llangle z \rrangle \in \llangle G\rrangle} HR_n^{+}\left(G , k[G/G_z]\right).\]
\end{thm}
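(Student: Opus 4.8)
The plan is to mirror Loday's proof of the dihedral decomposition \cite[Proposition 4.9]{Lod-dihed}, keeping track only of the reflexive operator $r$ and discarding the cyclic operator $t$. By Proposition \ref{hyperhom-prop}, $HR_\star^+(k[G])$ is the hyperhomology of $C_2$ with coefficients in the Hochschild complex $C_\star\left(\mathcal{L}^+(k[G])\right)$. Since the coend tensor product preserves direct sums (Section \ref{ref-hom-sec}), so does $\mathrm{Tor}^{\Delta R^{op}}_\star(k^\ast,-)$ in its second variable; it will therefore suffice to produce a decomposition of this complex \emph{as a complex of $k[C_2]$-modules} and then pass to hyperhomology one $C_2$-stable piece at a time.

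First I would reinterpret the Hochschild complex. The simplicial module $[n]\mapsto k\left[G^{n+1}\right]$ is the linearisation of the reflexive set $\Gamma_\star G$, and Loday's identification \cite[7.4.2]{Lod} gives an isomorphism with the Eilenberg-Mac Lane complex $C_\star\left(G,\overline{k[G]}\right)$, sending $\left(g_0,\dotsc,g_n\right)$ to $g_0 g_1\cdots g_n\otimes[g_1,\dotsc,g_n]$. The work here is to transport the reflexive operator of Definition \ref{loday-defn} across this isomorphism and verify that it becomes the operator of Definition \ref{action1}; the sign $(-1)^{n(n+1)/2}$ arises from reversing the order of the bar coordinates and is the algebraic shadow of the simplex flip appearing in the proof of Theorem \ref{grp-alg-thm}.

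Next I would split the coefficients. Viewed as a right $k[G]$-module, $\overline{k[G]}$ is the permutation module for the conjugation action, so it decomposes as $\overline{k[G]}\cong\bigoplus_{\llangle z\rrangle\in\llangle G\rrangle}k[G/G_z]$, the summand for $\llangle z\rrangle$ being the span of the conjugacy class of $z$ and identified with $k[G/G_z]$ through $gzg^{-1}\mapsto gG_z$. This yields the Burghelea decomposition $C_\star\left(G,\overline{k[G]}\right)\cong\bigoplus_{\llangle z\rrangle}C_\star\left(G,k[G/G_z]\right)$ of simplicial $k$-modules, exactly as in \cite[7.4.6]{Lod}.

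The essential point is to understand how the reflexive operator interacts with this splitting, and this is where the content lies. A direct computation shows that $r$ carries the $\llangle z\rrangle$-summand onto the $\llangle z^{-1}\rrangle$-summand: if $h$ lies in the class of $z$ then $g^{-1}h^{-1}g$ lies in the class of $z^{-1}$, where $g=g_1\cdots g_n$. Hence the $C_2$-action respects the grouping of conjugacy classes into inversion-orbits, and $C_\star\left(G,\overline{k[G]}\right)$ splits as a direct sum of $C_2$-stable subcomplexes, one for each orbit, on which the operator is precisely the one recorded in Definition \ref{action1}. Passing to hyperhomology via Proposition \ref{hyperhom-prop} then computes the summands $HR_n^+\left(G,k[G/G_z]\right)$, and reassembling over all conjugacy classes gives the isomorphism. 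I expect the main obstacle to be exactly this matching: checking that the involution transported from Definition \ref{loday-defn} agrees on the nose with Definition \ref{action1}—with the correct conjugating element $g=g_1\cdots g_n$ and the correct sign—so that the pairing $z\leftrightarrow z^{-1}$ is faithfully encoded in the summands.
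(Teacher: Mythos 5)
Your proposal is correct and follows essentially the same route as the paper's own proof, which likewise composes the Mac Lane isomorphism $\Phi\colon k[G]^{n+1}\rightarrow C_n\left(G,\ol{k[G]}\right)$ with the conjugacy-class splitting $\xi\colon \ol{k[G]}\rightarrow \bigoplus_{\llangle z\rrangle \in \llangle G\rrangle} k\left[G/G_z\right]$, checks that both are compatible with the reflexive operators of Definition \ref{action1}, and then passes to reflexive homology. The differences are presentational: you run the final step through the $C_2$-hyperhomology description of Proposition \ref{hyperhom-prop} instead of applying $\mathrm{Tor}^{\Delta R^{op}}$ to the composite isomorphism directly, and your explicit remark that the involution interchanges the $\llangle z\rrangle$- and $\llangle z^{-1}\rrangle$-summands (so that only inversion-orbits of conjugacy classes are $C_2$-stable) spells out a point the paper's proof compresses into the phrase ``compatible with the actions described in Definition \ref{action1}''.
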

\begin{proof}
Let $M=\overline{k[G]}$. Recall the Mac Lane isomorphism
\[\Phi \colon k[G]^{n+1}\rightarrow C_n\left(G, M\right)\]
from \cite[Section 7.4]{Lod} for example. The Mac Lane isomorphism is compatible with the actions described in Definition \ref{action1}. Recall from the proof of \cite[Proposition 4.7]{Lod-dihed} that there is an isomorphism of right $k[G]$-modules
\[\xi \colon \overline{k[G]}\rightarrow \bigoplus_{\llangle z \rrangle \in \llangle G\rrangle} k\left[ G/G_z\right].\]
This isomorphism is also compatible with the actions described in Definition \ref{action1}

We therefore have isomorphisms
\[k[G]^{n+1} \xrightarrow{\Phi} C_n\left(G, M\right) \xrightarrow{\xi} \bigoplus_{\llangle z \rrangle \in \llangle G\rrangle} C_{n}\left(G,k[G/G_z]\right)\]
compatible with the reflexive $k$-module structures and the result follows upon taking reflexive homology.
\end{proof}

\begin{cor}
If $G$ is an abelian group then 
\[HR_{\star}^{+}\left(k[G]\right) \cong \bigoplus_{\left\lvert G\right\rvert} HR_{\star}^{+}\left(G,k\right).\]
\end{cor}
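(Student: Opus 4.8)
The plan is to read this off from Theorem \ref{kg-thm} by specializing the conjugacy-class decomposition to the abelian case; no new spectral-sequence or bicomplex argument beyond what already underlies that theorem should be needed. Two elementary observations do the bulk of the work. First, since conjugation is trivial in an abelian group, every conjugacy class $\llangle z\rrangle$ is a singleton, so the indexing set $\llangle G\rrangle$ is in bijection with $G$ itself and the direct sum in Theorem \ref{kg-thm} has exactly $\lvert G\rvert$ summands. Second, for each $z$ the centralizer $G_z=\{g:gz=zg\}$ is all of $G$, so the homogeneous space $G/G_z$ is a one-point set and $k[G/G_z]\cong k$ carries the trivial $G$-action. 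Feeding these into $HR_n^{+}(k[G])\cong\bigoplus_{\llangle z\rrangle} HR_n^{+}(G,k[G/G_z])$ turns every summand into $HR_n^{+}(G,k)$ and produces $\bigoplus_{\lvert G\rvert}HR_{\star}^{+}(G,k)$.

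The step I expect to demand the most care is the reflexive structure carried by the coefficients after the collapse $G/G_z=\ast$. By Definition \ref{action1} the operator $r_n$ acts on $\bigoplus_{\llangle z\rrangle}C_n(G,k[G/G_z])$ via $h\otimes[g_1,\dots,g_n]\mapsto(-1)^{n(n+1)/2}\,g^{-1}h^{-1}g\otimes[g_n^{-1},\dots,g_1^{-1}]$, and the inversion in $h^{-1}$ interchanges the block indexed by $\llangle z\rrangle$ with the block indexed by $\llangle z^{-1}\rrangle$. The hard part will be to track this interchange through the Mac Lane isomorphism $\Phi$ and the decomposition isomorphism $\xi$ used in the proof of Theorem \ref{kg-thm}, and to confirm that once each coefficient $k[G/G_z]$ has been replaced by the single copy $k$ the residual data on the $\llangle z\rrangle$-block is exactly the reflexive module $k[B_{\star}G]$ whose homology is $HR_{\star}^{+}(G,k)$. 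Concretely, on a trivial coefficient the conjugation $g^{-1}h^{-1}g$ contributes nothing and what remains of $r_n$ is the sign $(-1)^{n(n+1)/2}$ together with the order-reversing inversion $[g_1,\dots,g_n]\mapsto[g_n^{-1},\dots,g_1^{-1}]$, which is precisely the reflexive operator on $B_{\star}G$; verifying this compatibility, and in particular the correct handling of the pairing $\llangle z\rrangle\leftrightarrow\llangle z^{-1}\rrangle$, is the substance of the argument.

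With that identification in hand the corollary follows formally. I would close by remarking that the abelian hypothesis is exactly what makes the statement so clean: the simultaneous collapse of all centralizers forces every coefficient module to be the single copy $k$ and every summand to be the same object $HR_{\star}^{+}(G,k)$, so the conjugacy-class decomposition of Theorem \ref{kg-thm} degenerates into $\lvert G\rvert$ identical pieces indexed by the elements of $G$.
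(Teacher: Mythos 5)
Your proposal is correct and takes essentially the same route as the paper, whose entire proof consists of your two opening observations: in an abelian group every element is its own conjugacy class, and each centralizer $G_z$ equals $G$, so every coefficient module $k[G/G_z]$ collapses to $k$ and Theorem \ref{kg-thm} degenerates into $\lvert G\rvert$ copies of $HR_{\star}^{+}(G,k)$. The compatibility issues you flag --- the pairing of blocks under $z\mapsto z^{-1}$ and the signs tracked through the Mac Lane isomorphism --- belong to the proof of Theorem \ref{kg-thm} itself, which the paper, like you, simply invokes at this point rather than re-examining.
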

\begin{proof}
If $G$ is abelian, then each element $z\in G$ has its own conjugacy class and the centralizer $G_z$ is isomorphic to $G$. 
\end{proof} 

For non-abelian groups we can also identify some of the summands in the decomposition.

\begin{prop}
Let $G$ be a discrete group. The $\llangle 1\rrangle$-component of $HR_{\star}^{+}\left(k[G]\right)$ is isomorphic to $HR_{\star}^{+}\left(G,k\right)$.
\end{prop}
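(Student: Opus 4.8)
The plan is to identify the $\llangle 1 \rrangle$-summand of the decomposition in Theorem \ref{kg-thm} concretely and to recognise it as the reflexive homology of the bar construction. First I would observe that the centralizer of the identity is all of $G$, so $G_1 = G$, the quotient $G/G_1$ is a single point, and $k[G/G_1] \cong k$ with the trivial $G$-action. Thus the $\llangle 1 \rrangle$-summand $HR_\star^+(G, k[G/G_1])$ is the reflexive homology of the Eilenberg--Mac Lane complex with trivial coefficients, equipped with the reflexive structure of Definition \ref{action1}. It remains to match this with $HR_\star^+(G,k) = HR_\star^+(k[B_\star G])$.

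The main step is to work on the level of the reflexive $k$-module $\mathcal{L}^+(k[G]) = k[\Gamma_\star G]$ that computes $HR_\star^+(k[G])$ (using the standard involution $\bar g = g^{-1}$ on $k[G]$). I would decompose $k[\Gamma_\star G]$ according to the conjugacy class of the product $g_0 g_1 \cdots g_n$: the face and degeneracy maps preserve this conjugacy class and the reflexive operator sends the class of $w$ to the class of $w^{-1}$, so this is a decomposition into reflexive sub-$k$-modules indexed by the $r$-orbits of conjugacy classes. The class $\llangle 1 \rrangle$ forms its own orbit, so the sub-$k$-module $N_\star \subseteq k[\Gamma_\star G]$ spanned by tuples with $g_0 g_1 \cdots g_n = 1$ is a reflexive sub-$k$-module. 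Since the coend tensor product preserves direct sums, so does reflexive homology, and hence $HR_\star^+(k[N_\star])$ is exactly the $\llangle 1 \rrangle$-summand of $HR_\star^+(k[G])$; tracing the isomorphisms $\Phi$ and $\xi$ from the proof of Theorem \ref{kg-thm} shows $\Phi|_{N_\star}$ is the projection $p$, so this summand agrees with the one named in the theorem.

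Finally I would show that the projection $p\colon \Gamma_\star G \to B_\star G$ of Remark \ref{proj-map-rem} restricts to an isomorphism of reflexive sets $N_\star \xrightarrow{\cong} B_\star G$. In degree $n$ this is the bijection sending $(g_0, \dots, g_n)$ with $g_0 = (g_1 \cdots g_n)^{-1}$ to $(g_1, \dots, g_n)$, with inverse adjoining the forced entry $g_0$; one checks directly that $N_\star$ is closed under the faces, degeneracies and the reflexive operator (for the last face and for $r_n$ the constraint $g_0 \cdots g_n = 1$ is precisely what returns the image to $N_\star$), and that $p$ intertwines the reflexive operators, since $p(r_n(g_0, \dots, g_n)) = (g_n^{-1}, \dots, g_1^{-1}) = r_n^{B}(g_1, \dots, g_n)$. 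Applying $k[-]$ and taking reflexive homology then yields $HR_\star^+(k[N_\star]) \cong HR_\star^+(k[B_\star G]) = HR_\star^+(G,k)$, completing the argument.

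The point I expect to require the most care is the bookkeeping of the reflexive structures: $\Gamma_\star G$ and $B_\star G$ carry the geometric, sign-free reflexive operators, whereas the summands in Theorem \ref{kg-thm} are described via the signed structure of Definition \ref{action1}. Working throughout with $k[\Gamma_\star G]$ and the sign-free projection $p$ sidesteps the signs, but I would want to verify explicitly that the $\llangle 1 \rrangle$-summand identified via $N_\star$ coincides with the $\llangle 1 \rrangle$-summand of the theorem, i.e.\ that $\Phi$ and $\xi$ carry $N_\star$ isomorphically onto the $z = 1$ Eilenberg--Mac Lane summand.
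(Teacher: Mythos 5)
Your proposal is correct, but it does substantially more than the paper, whose entire proof is your first paragraph: since $G_1=G$, the coefficient module $k[G/G_1]$ is trivial, so the $\llangle 1\rrangle$-summand is the reflexive homology of $C_{\star}(G,k)$, which the paper then identifies with $HR_{\star}^{+}(G,k)$ essentially by definition. Your second and third paragraphs --- cutting out the sub-reflexive set $N_{\star}\subseteq \Gamma_{\star}G$ of tuples with product $1$, checking it is closed under the faces, degeneracies and the operators $r_n$, and showing that the projection $p$ of Remark \ref{proj-map-rem} restricts to a degreewise bijection, hence an isomorphism of reflexive sets $N_{\star}\cong B_{\star}G$ --- have no counterpart in the paper's proof. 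That extra work is not wasted: it supplies precisely the identification the paper leaves implicit, namely that the $\llangle 1\rrangle$ Eilenberg--Mac Lane summand of Theorem \ref{kg-thm}, which a priori carries the signed operators of Definition \ref{action1}, computes the same homology as $k[B_{\star}G]$ with its sign-free reflexive structure, which is the paper's actual definition of $HR_{\star}^{+}(G,k)$. By working throughout with the sign-free structures on $k[\Gamma_{\star}G]=\mathcal{L}^{+}(k[G])$ and $k[B_{\star}G]$ and tracing $\Phi$ and $\xi$, you sidestep that sign discrepancy rather than having to reconcile it; the paper's one-line proof buys brevity at the cost of treating this compatibility as automatic, while yours buys a self-contained and verifiable argument at the simplicial level.
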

\begin{proof}
The centralizer $G_1$ is equal to $G$ so the reflexive homology of $C_{\star}\left(G , k[G/G_1]\right)$ is the reflexive homology of $C_{\star}(G,k)$.
\end{proof}

\begin{prop}
Let $G$ be a discrete group. Let $z \in G$ be a central element of order two. The $\llangle z\rrangle$-component of $HR_{\star}^{+}\left(k[G]\right)$ is isomorphic to $HR_{\star}^{+}\left(G,k\right)$.
\end{prop}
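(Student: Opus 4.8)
The plan is to start from the conjugacy-class decomposition of Theorem \ref{kg-thm}, which identifies the $\llangle z\rrangle$-component of $HR_\star^+(k[G])$ with $HR_\star^+(G, k[G/G_z])$, i.e.\ the reflexive homology of the Eilenberg--Mac Lane complex $C_\star(G, k[G/G_z])$ equipped with the reflexive operators of Definition \ref{action1}. First I would observe that, since $z$ is central, its centralizer is $G_z = G$, so $G/G_z$ is a single point and $k[G/G_z] \cong k$ as a right $k[G]$-module with trivial action. Concretely, inside $\ol{k[G]}$ the $\llangle z\rrangle$-summand is the free $k$-module on the singleton conjugacy class $\{z\}$, and the right action $h \ast g = g^{-1}hg$ fixes $z$ because $z$ is central. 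Thus the underlying chain complex of the $\llangle z\rrangle$-component is exactly $C_\star(G, k)$ with trivial coefficients, the same underlying complex that computes $HR_\star^+(G, k)$ via the $\llangle 1\rrangle$-component of the preceding proposition. It remains only to match the two reflexive structures.

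Next I would compare the reflexive operators. On the $\llangle z\rrangle$-component, Definition \ref{action1} gives
\[r_n\left(z \otimes [g_1, \dots, g_n]\right) = (-1)^{n(n+1)/2}\left(g^{-1}z^{-1}g \otimes [g_n^{-1}, \dots, g_1^{-1}]\right),\]
with $g = g_1 \cdots g_n$. Centrality of $z$ collapses $g^{-1}z^{-1}g$ to $z^{-1}$, and the order-two hypothesis gives $z^{-1} = z$, so
\[r_n\left(z \otimes [g_1, \dots, g_n]\right) = (-1)^{n(n+1)/2}\left(z \otimes [g_n^{-1}, \dots, g_1^{-1}]\right).\]
This is formally identical to the reflexive operator on the $\llangle 1\rrangle$-component, where the analogous formula with coefficient generator $1$ in place of $z$ (using $1^{-1}=1$) computes $HR_\star^+(G, k)$.

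Finally I would conclude by exhibiting the $k$-linear bijection $z \otimes [g_1, \dots, g_n] \mapsto 1 \otimes [g_1, \dots, g_n]$ carrying the generator of the $\llangle z\rrangle$-summand to that of the $\llangle 1\rrangle$-summand. This map manifestly commutes with the face and degeneracy maps, which act only on the bracket part and on the trivial coefficient, and by the previous paragraph it intertwines the two reflexive operators; hence it is an isomorphism of reflexive $k$-modules and induces the desired isomorphism $HR_\star^+(G, k[G/G_z]) \cong HR_\star^+(G, k)$ on reflexive homology. The only delicate point — and the place where both hypotheses on $z$ are genuinely used — is the verification that $r_n$ preserves the $\llangle z\rrangle$-summand: centrality is what trivializes the conjugation, while the order-two condition is exactly what keeps $z^{-1}$ inside the singleton class $\{z\}$. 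Without the order-two hypothesis the operator $r_n$ would instead carry the $\llangle z\rrangle$-summand to the $\llangle z^{-1}\rrangle$-summand, and the argument would break down.
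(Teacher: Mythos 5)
Your proposal is correct and is essentially the paper's own argument. The paper simply cites Loday's quasi-isomorphism $k\otimes C_{\star}\left(G_z\right)\rightarrow k\left[G/G_z\right]\otimes C_{\star}(G)$ and remarks that it is compatible with the reflexive structure of Definition \ref{action1} when $z$ is central of order two; since centrality gives $G_z=G$, that quasi-isomorphism collapses to exactly the explicit isomorphism of reflexive complexes you construct, and both arguments turn on the same two observations (centrality trivializes the conjugation $g^{-1}z^{-1}g$, and the order-two hypothesis keeps the $\llangle z \rrangle$-summand stable under $r_n$ by forcing $z^{-1}=z$).
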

\begin{proof}
Recall from the proof of \cite[Proposition 4.7]{Lod-dihed} that there is a quasi-isomorphism
\[k\otimes C_n\left(G_z\right)\rightarrow k\left[G/G_z\right] \otimes C_n(G)\]
for any $z\in G$.
When $z$ is a central element of order two this quasi-isomorphism is compatible with the reflexive structure described in Definition \ref{action1}, whence the result.
\end{proof}

\section{Reflexive homology of singular chains on a Moore loop space}
\label{Moore-loop-sec}
Goodwillie \cite[Section \RNum{5}]{Goodwillie} proved that for a sufficiently nice space $X$, the cyclic homology of the singular chain complex of the Moore loop space of $X$ is isomorphic to the $S^1$-equivariant homology of the free loop space on $X$. Dunn \cite[3.6]{Dunn} proved that this result can be extended to a result for the dihedral homology theory and $O(2)$-equivariant homology. In this section we will prove the analogous theorem for the reflexive homology theory and $C_2$-equivariant homology. We will also provide a $C_2$-equivariant analogue of Lodder's result for free-loop suspension spaces \cite[3.3.3]{Lodder1}. 

\subsection{Locally equiconnected spaces}
In this brief subsection we recall what it means for a space to be \emph{locally equiconnected}, commonly abbreviated to LEC. Our results rely on the work of Dunn \cite{Dunn}, which requires this assumption in relation to Milnor's simplicial group model for loop-suspension spaces (see \cite{Milnor} and \cite{Lewis}). Local equiconnectivity was introduced by Fox \cite{Fox} as a strengthened form of local contractibilty and a weakened form of the absolute neighbourhood retract property. There are several equivalent definitions but we will use the definition in terms of \emph{Hurewicz cofibrations}. The reader is directed to \cite{Strom}, where the definition originally occurred, and \cite[Chapter \RNum{7}]{Bredon}, for a textbook account of Hurewicz cofibrations. 

\begin{defn}
A topological space is said to be \emph{locally equiconnected} if the diagonal map $X\rightarrow X \times X$ is a Hurewicz cofibration.
\end{defn}

\begin{eg}
The notion of LEC space also appears in work of Serre, where it is called ULC \cite[p. 490]{Serre}. All CW-complexes are LEC spaces \cite[Corollary \RNum{3}.2]{Dyer-Eilenberg}. 
\end{eg}

\subsection{Moore loops and free loop spaces}
\label{moore-loop-free-loop-subsec}

We begin by recalling the definition of the Moore loop space.

\begin{defn}
Let $M\colon \mathbf{Top}_{\star}\rightarrow \mathbf{Top}_{\star}$ denote the Moore loop functor. For $X\in \mathbf{Top_{\star}}$ with basepoint $\star$, the topological space $M(X)$ is the subset of 
\[\mathrm{Hom}_{\mathbf{Top}_{\star}}\left([0,\infty),X\right)\times [0,\infty)\]
consisting of all pairs of the form $\left( f, r\right)$ such that $f(t)=\star$ for all $t\geqslant r$. In general we will omit the $r$ and refer to a Moore loop $f$.
\end{defn} 

Let $Y$ be a group-like LEC topological monoid with involution. As noted in Example \ref{sing-chain-eg2}, the singular chain complex $S_{\star}\left(Y, k\right)$ is an involutive DGA, with the involution induced from $Y$. Recall the reflexive bar construction, $\Gamma\left(S_{\star}\left(Y, k\right)\right)$, on $S_{\star}\left(Y, k\right)$ from Example \ref{sing-chain-eg2}.

\begin{thm}
\label{moore-loop-free-loop-thm}
Let $Y$ be a group-like LEC topological monoid with involution. There is an isomorphism of graded $k$-modules
\[HR_{\ast}\left(\Gamma\left(S_{\star}\left(Y, k\right)\right)\right)\cong H_{\ast}\left(EC_2 \times_{C_2} \mathcal{L}BY,k\right)\]
where the $C_2$-action on $\mathcal{L}BY$ is induced from the involution on $Y$ and reversing the direction of loops.
\end{thm}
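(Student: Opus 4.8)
The plan is to reduce the statement to Dunn's theorem (\cite[3.6]{Dunn}) by the same mechanism used in the proof of Theorem \ref{grp-alg-thm}, exploiting the fact that the reflexive structure on the dihedral nerve is compatible with the cyclic structure. The target $H_{\ast}\left(EC_2 \times_{C_2} \mathcal{L}BY,k\right)$ is the $C_2$-equivariant Borel homology of the free loop space. Goodwillie's theorem, as extended by Dunn to the dihedral setting, already identifies the $O(2)$-equivariant homology of $\mathcal{L}BY$ with the dihedral homology of $\Gamma\left(S_{\star}\left(Y,k\right)\right)$. Since reflexive homology sits inside dihedral homology as the part detecting only the involution (the $C_2$-factor of $O(2)$) rather than the full rotation action, the strategy is to restrict Dunn's $O(2)$-equivariant comparison to the reflexive part.

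First I would recall Dunn's construction: for a group-like LEC monoid $Y$, the dihedral bar construction $\Gamma\left(S_{\star}\left(Y,k\right)\right)$ is a dihedral chain complex, and by \cite[3.6]{Dunn} there is a $C_2\ltimes S^1 = O(2)$-equivariant equivalence relating its realization to $\mathcal{L}BY$ with the $O(2)$-action given by loop rotation together with loop-reversal composed with the involution on $Y$. The key point, exactly parallel to the group-algebra case, is that the reflexive operators $r_n$ of Example \ref{sing-chain-eg2} are compatible with the cyclic operators so that the realization carries an honest $C_2$-action refining the circle action, and this $C_2$-action is precisely loop-reversal composed with the involution on $Y$. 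I would then observe that reflexive hyperhomology $HR_{\ast}\left(\Gamma\left(S_{\star}\left(Y,k\right)\right)\right)$, defined via $\mathbf{Tor}^{\Delta R^{op}}$ in Definition \ref{hyperhom-defn}, computes exactly the $C_2$-Borel homology of this realization, by the crossed-simplicial-group identification $H_{\star}\left(EC_2 \times_{C_2} \llv X_{\star}\rrv\right)$ for a reflexive space $X_{\star}$ (the hyperhomology analogue of \cite[6.13]{FL}, as used already in Proposition \ref{hyperhom-prop} and Theorem \ref{grp-alg-thm}).

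The assembly is then: reflexive hyperhomology of the reflexive chain complex equals the $C_2$-Borel homology of its realization; the realization is $C_2$-equivariantly equivalent to $\mathcal{L}BY$ with the stated action via Dunn's equivalence restricted along $C_2 \hookrightarrow O(2)$; hence $HR_{\ast}\left(\Gamma\left(S_{\star}\left(Y,k\right)\right)\right) \cong H_{\ast}\left(EC_2 \times_{C_2} \mathcal{L}BY,k\right)$. I would phrase the last step by noting that restricting a space with $O(2)$-action along the subgroup inclusion $C_2 \hookrightarrow O(2)$ and forming the Borel construction is compatible with passing from dihedral homology to reflexive homology, since the reflexive crossed simplicial group is the sub-crossed-simplicial-group of the dihedral one corresponding to $C_2 \subset O(2)$.

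The main obstacle, which I would treat carefully rather than routinely, is verifying that the Borel construction and the equivariant equivalence behave correctly under the LEC hypothesis, i.e. that Dunn's simplicial-group model for the loop-suspension space and the resulting equivalence are genuinely $C_2$-equivariant (not merely $S^1$-equivariant) and remain an equivalence after passage to Borel homology. Concretely, one must check that the involution on the singular chains $S_{\star}(Y,k)$ induces the correct involution on Milnor's model and that the cofibrancy guaranteed by local equiconnectivity makes the realization well-behaved for the $C_2$-Borel construction; this is where the LEC assumption is used, and it is the crux of transferring Dunn's $O(2)$-result to the reflexive/$C_2$ setting.
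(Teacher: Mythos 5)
Your proposal is correct and follows essentially the same route as the paper: the paper's proof simply cites Dunn's Theorem 3.6 and notes that its supporting results (Dunn's Propositions 2.10, 3.2, 3.3 and 3.5) remain valid when one retains only the simplicial and reflexive structure of the dihedral objects, which is precisely your strategy of restricting Dunn's $O(2)$-equivariant comparison along $C_2 \hookrightarrow O(2)$ and checking $C_2$-equivariance of the intermediate steps. The only caution is your phrase that reflexive homology ``sits inside'' dihedral homology --- this is not literally true at the level of homology groups, but your actual argument restricts the group action on spaces and equivalences rather than on homology, so no harm is done.
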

\begin{proof}
This follows directly from \cite[3.6]{Dunn}. We note that the given theorem relies on Dunn's Propositions 2.10, 3.2, 3.3 and 3.5. One can check that these results hold when we only consider the simplicial and reflexive structure from the dihedral objects considered in that paper. 
\end{proof}

\begin{cor}
Let $X$ be a connected, LEC topological space. There is an isomorphism of graded $k$-modules
\[HR_{\ast}\left(\Gamma\left(S_{\star}\left(MX, k\right)\right)\right)\cong H_{\ast}\left(EC_2 \times_{C_2} \mathcal{L}X,k\right),\]
where $X$ is equipped with the trivial involution and $MX$ is equipped with the involution which reverses the direction of loops.
\end{cor}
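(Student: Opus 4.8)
The plan is to deduce the corollary directly from Theorem \ref{moore-loop-free-loop-thm} by specializing to $Y = MX$, so the real content is to check that $MX$ satisfies the hypotheses of that theorem and that its classifying space recovers $X$ together with the asserted $C_2$-action. First I would record that for a connected based space $X$ the Moore loop space $MX$ is a topological monoid under concatenation, with strict unit the constant loop of length zero. It is group-like because $\pi_0(MX)\cong\pi_1(X)$ is a group, and loop reversal $f\mapsto\ol{f}$, $\ol{f}(t)=f(r-t)$ for a Moore loop of length $r$, is an order-two anti-homomorphism, since reversing a concatenation reverses the order of the factors; this is precisely the involution on $S_{\star}(MX,k)$ described in Example \ref{sing-chain-eg2}. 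The remaining hypothesis, that $MX$ be LEC, I would obtain from the assumption that $X$ is connected and LEC, using the same loop-space input from \cite{Dunn} (ultimately Milnor's simplicial model) that underlies Theorem \ref{moore-loop-free-loop-thm}. With these checks in place the theorem applies to $Y=MX$ and yields
\[HR_{\ast}\left(\Gamma\left(S_{\star}\left(MX,k\right)\right)\right)\cong H_{\ast}\left(EC_2\times_{C_2}\mathcal{L}B(MX),k\right).\]

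Next I would invoke the standard fact that for connected $X$ the classifying space of the Moore loop space recovers the space, $B(MX)\simeq X$, so that $\mathcal{L}B(MX)\simeq\mathcal{L}X$. The point requiring care is the $C_2$-equivariance: I must show that under this equivalence the involution on $B(MX)$ induced by loop reversal corresponds to the \emph{trivial} involution on $X$, so that the action on $\mathcal{L}X$ reduces to reversal of loop direction alone, as claimed. Although loop reversal induces inversion on $\pi_0(MX)\cong\pi_1(X)$, the induced self-map of $B(MX)$ is the geometric realization of the reflexive operators \emph{together with} the flip of each simplex; exactly as in the computation underlying Theorem \ref{grp-alg-thm}, the inversion coming from the reflexive operators is cancelled by the orientation reversal of the simplices, so the induced map acts as the identity on $\pi_1(B(MX))\cong\pi_1(X)$ and is therefore homotopic to the identity on $B(MX)\simeq X$. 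Consequently the $C_2$-action on $\mathcal{L}X$ is the standard loop-reversal action with $X$ carrying the trivial involution, and the displayed isomorphism becomes the statement of the corollary.

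I expect the main obstacle to be precisely this equivariant identification of $B(MX)\simeq X$. One must track how the reflexive structure on $\Gamma\left(S_{\star}(MX,k)\right)$ realizes on $\mathcal{L}X$ and, in particular, confirm the cancellation between the order-reversing involution and the simplex-flip, so that the base involution is genuinely trivial rather than the nontrivial delooping of inversion; upgrading the $\pi_1$-level computation to an honest (equivariant) homotopy is the delicate step. By contrast, the monoid and group-like properties are formal, and the LEC hypothesis on $MX$ together with the equivalence $B(MX)\simeq X$ are standard, so once the compatibility of the involutions is established the result follows at once from Theorem \ref{moore-loop-free-loop-thm}.
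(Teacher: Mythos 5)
Your overall strategy---specialize Theorem \ref{moore-loop-free-loop-thm} to $Y=MX$ and identify $B(MX)$ with $X$---is the same as the paper's, and your verification that $MX$ is a group-like topological monoid whose loop-reversal is an order-two anti-involution is fine (the LEC hypothesis for $MX$ is left implicit in the paper as well). The gap is at exactly the step you flag as delicate, and your proposed resolution does not close it, for two reasons. First, the deduction ``the involution on $B(MX)$ induces the identity on $\pi_1$, hence is homotopic to the identity'' is valid only when the space is aspherical; that is why a $\pi_1$-level cancellation argument is available for $BG$ in the setting of Theorem \ref{grp-alg-thm}, but $B(MX)\simeq X$ is an arbitrary connected LEC space (take $X=S^2$: every self-map induces the identity on $\pi_1$, yet need not be homotopic to the identity). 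Second, and more fundamentally, even if the involution $\omega$ on $B(MX)$ were homotopic to the identity as a map, this would not identify the Borel constructions: the Borel construction depends on the action up to \emph{equivariant} equivalence, not on the homotopy class of the generating involution. For instance, rotation by $\pi$ on $S^1$ is homotopic to the identity, yet $EC_2\times_{C_2}S^1$ for that free action is homotopy equivalent to $S^1$, whereas for the trivial action it is $BC_2\times S^1$; these have different homology. So a nonequivariant homotopy $\omega\simeq\mathrm{id}$ cannot be used to replace the action on $\mathcal{L}B(MX)$ coming from $\omega$ and loop reversal by the loop-reversal-only action on $\mathcal{L}X$.

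The paper's proof avoids both problems by working at the point-set level: May's explicit map $\xi\colon B(MX)\rightarrow X$ from \cite[15.4]{may-fibrations} is a weak homotopy equivalence when $X$ is connected, and by \cite[2.9]{Dunn} it is \emph{strictly} $C_2$-equivariant for the given involutions (trivial on $X$, loop reversal on $MX$), i.e. $\xi\circ\omega=\xi$ on the nose. An equivariant map that is a nonequivariant weak equivalence induces, after applying $\mathcal{L}$ (which preserves weak equivalences by the evaluation fibration $\Omega Y\rightarrow\mathcal{L}Y\rightarrow Y$ and the five lemma), a weak equivalence of Borel constructions, which combined with Theorem \ref{moore-loop-free-loop-thm} gives the corollary. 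To repair your argument, replace the ``homotopic to the identity'' step by this strict equivariance of May's map, or otherwise produce a genuinely $C_2$-equivariant equivalence $B(MX)\rightarrow X$; the $\pi_1$-level cancellation you describe is consistent with that statement but does not prove it.
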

\begin{proof}
May \cite[15.4]{may-fibrations} shows that there is a map $\xi\colon BMX\rightarrow X$, which is a weak homotopy equivalence if $X$ is connected. As noted in \cite[2.9]{Dunn}, with the given involutions the map $\xi$ is $C_2$-equivariant. The corollary now follows from Theorem \ref{moore-loop-free-loop-thm} by setting $Y=MX$.
\end{proof}

\subsection{Moore loops and free loop-suspension spaces}
In this section we prove a reflexive analogue of Lodder's result for free loop-suspension spaces \cite[3.3.3]{Lodder1}. We identify the $C_2$-equivariant homology of a free loop-suspension space $\mathcal{L}\Sigma X$ as the reflexive homology of the singular chains on a reflexive space constructed from the Moore loop space of $X$. In this case we can use a smaller construction than the reflexive bar construction on a DGA since the suspension of a topological space is always path-connected.

We begin by recalling the definition of the suspension functor.

\begin{defn}
Let $\Sigma\colon \mathbf{Top}_{\star} \rightarrow \mathbf{Top}_{\star}$ denote the suspension functor $S^1 \wedge -$.
\end{defn}

\begin{defn}
Let $X$ be a based topological space. Let $\Sigma X=S^1 \wedge X$ be equipped with the involution $\overline{(t,x)}=(1-t,x)$, given by reversing the suspension co-ordinate. We define a reflexive topological space
\[M\Sigma X(-)\colon \Delta R^{op} \rightarrow \mathbf{Top}_{\star}\]
as follows.

For each $n\geqslant 0$, we have $M\Sigma X([n])=\left(M\Sigma X\right)^{n+1}$, the $(n+1)$-fold Cartesian product of the Moore loop space on $\Sigma X$.

The face maps $\partial_i\colon M\Sigma X([n])\rightarrow M\Sigma X([n-1])$ are given by concatenation of loops:
\[\partial_i\left(f_0, \dotsc , f_n\right) = \begin{cases}
\left( f_0,\dotsc , f_{i-1}, f_i\cdot f_{i+1} , f_{i+2},\dotsc , f_n\right) & 0\leqslant i \leqslant n-1\\
\left(f_n\cdot f_0, f_1,\dotsc , f_{n-1}\right) & i=n.
\end{cases}
\]
The degeneracy maps $s_j\colon M\Sigma X([n])\rightarrow M\Sigma X([n+1])$ are given by inserting the trivial loop:
\[s_j\left(f_0, \dotsc , f_n\right)= \left(f_0, \dotsc , f_j , 1 , f_{j+1},\dotsc , f_n\right)\]
for $0\leqslant j \leqslant n$.

The reflexive operators $r_n\colon M\Sigma X([n]) \rightarrow M\Sigma X([n])$ for $n\geqslant 0$ are given by reversing the direction of loops and applying the involution on $\Sigma X$: 
\[r_n\left(f_0, \dotsc , f_n\right) = \left(\overline{f}_0^{-1}, \overline{f}_n^{-1}, \dotsc , \overline{f}_1^{-1}\right).\]
\end{defn}

\begin{defn}
Let $\mathcal{S}$ denote the reflexive chain complex obtained by taking the singular chain complex $S_{\star}\left(M\Sigma X(-), k\right)$ on the reflexive topological space $M\Sigma X(-)$.
\end{defn}

\begin{thm}
\label{freeloop-susp-thm}
Let $X$ be a based topological space. Let $\mathcal{S}$ denote the singular chain complex on the reflexive topological space $M\Sigma X(-)$. There is an isomorphism of graded $k$-modules
\[HR_{\ast}\left(\mathcal{S}\right) \cong H_{\ast}\left( EC_2 \times_{C_2} \mathcal{L}\Sigma X,k\right).\]
\end{thm}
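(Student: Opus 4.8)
The plan is to mirror the strategy used for the Moore-loop/free-loop theorem, Theorem~\ref{moore-loop-free-loop-thm}, reducing the statement to Lodder's free-loop-suspension result \cite[3.3.3]{Lodder1} by checking that all the relevant structure survives when we forget the cyclic data of the dihedral objects and retain only the simplicial and reflexive structure. First I would recall that $\Sigma X$ is always path-connected, so that the Moore loop space $M\Sigma X$ is group-like and the reflexive topological space $M\Sigma X(-)$ plays the role that the reflexive bar construction on an involutive DGA played in Theorem~\ref{moore-loop-free-loop-thm}; the smaller construction is available precisely because the suspension is connected. The reflexive operators $r_n(f_0,\dotsc,f_n) = (\overline{f}_0^{-1},\overline{f}_n^{-1},\dotsc,\overline{f}_1^{-1})$ encode the involution given by reversing the direction of loops combined with the involution $\overline{(t,x)}=(1-t,x)$ on the suspension coordinate of $\Sigma X$.

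Next I would set up the comparison with the established equivariant statement. Lodder works with the full dihedral structure on this construction; the key observation is that the reflexive operators above are exactly the order-reversing part of that dihedral structure, and that the $C_2$-action on $\mathcal{L}\Sigma X$ induced from our reflexive operators agrees with the restriction along $C_2 \hookrightarrow O(2)$ of the $O(2)$-action appearing in Lodder's theorem. I would then invoke Proposition~\ref{hyperhom-prop} to interpret $HR_{\ast}(\mathcal{S})$ as the hyperhomology of $C_2$ with coefficients in the relevant Hochschild complex, or equivalently pass through the biresolution of Definition~\ref{bires-defn} applied degreewise to $\mathcal{S}$, yielding a tricomplex whose total homology computes $HR_{\ast}(\mathcal{S})$. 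The dihedral computation of Lodder produces a bicomplex computing $H_{\ast}(EO(2)\times_{O(2)}\mathcal{L}\Sigma X)$; restricting the group from $O(2)$ to its reflection subgroup $C_2$ and discarding the rotation (cyclic) direction collapses this to exactly the tricomplex computing $HR_{\ast}(\mathcal{S})$, and the Borel construction correspondingly restricts from $EO(2)\times_{O(2)}(-)$ to $EC_2\times_{C_2}(-)$.

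The main technical point, exactly as in the proof of Theorem~\ref{moore-loop-free-loop-thm}, is to verify that the auxiliary results Lodder uses—the weak equivalences and the identification of $\lvert M\Sigma X(-)\rvert$ with $\mathcal{L}\Sigma X$ built from Milnor's simplicial group model, together with the comparison maps between the geometric realization of the reflexive space and the free loop space—remain valid and $C_2$-equivariant when one retains only the simplicial and reflexive (rather than the full dihedral) structure. Here the LEC hypothesis enters through Milnor's model and Dunn's machinery; although $X$ itself is not assumed LEC, $\Sigma X$ inherits sufficient cofibrancy from the suspension coordinate, and I would check that Lodder's equivariant homotopy equivalences are compatible with the involution $r_n$ precisely as defined. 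The remaining steps—that the face and degeneracy maps are the simplicial operators of the bar-type construction on $M\Sigma X$ and that $r_n$ satisfies the reflexive relations of Definition~\ref{delta-r-defn}—are routine verifications analogous to those in Example~\ref{sing-chain-eg2}. The hard part, then, is not a new calculation but the careful bookkeeping confirming that Lodder's dihedral argument descends cleanly to the reflexive setting by forgetting the cyclic operators while preserving $C_2$-equivariance throughout.
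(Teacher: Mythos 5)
Your overall plan---adapt Lodder's dihedral argument to the reflexive setting by retaining only the simplicial and reflexive structure---matches the spirit of the paper's proof, and you correctly flag that the $C_2$-equivariance of the comparison maps is the point that needs checking. However, the mechanism you propose for linking $HR_{\ast}(\mathcal{S})$ to equivariant homology has a genuine gap. You claim that Lodder's complex computing $H_{\ast}(EO(2)\times_{O(2)}\mathcal{L}\Sigma X)$ can be ``restricted'' to $C_2$ by discarding the rotation direction, and that the Borel construction ``correspondingly restricts'' from $EO(2)\times_{O(2)}(-)$ to $EC_2\times_{C_2}(-)$. No such formal restriction exists: the two Borel constructions have genuinely different homology (already for a point, $H_{\ast}(BO(2),k)$ and $H_{\ast}(BC_2,k)$ differ in general), and discarding the cyclic direction of the dihedral tricomplex realizes the inclusion of the reflexive bicomplex into the dihedral tricomplex, which induces the natural map $HR_{\ast}\to HD_{\ast}$ appearing in the long exact sequences of Remark~\ref{LES-rem}---not an isomorphism. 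Asserting that the resulting subcomplex computes $C_2$-equivariant Borel homology is exactly the statement being proved, so the argument is circular at this step; Proposition~\ref{hyperhom-prop} likewise only rewrites $HR_{\ast}(\mathcal{S})$ as group hyperhomology and does not by itself connect it to the homology of any space.

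What is missing is the chain-level-to-space-level bridge that the paper uses instead: for any reflexive space $Y_{\star}$, combining the singular-chains constructions of \cite[Appendix B]{Lod} with the homotopy colimit results for crossed simplicial groups in \cite[Section 6]{FL} gives
\[HR_{n}\left(S_{\star}(Y_{\star},k)\right) \cong H_n\left(\mathrm{hocolim}_{\Delta R^{op}}\left(Y_{\star}\right)\right)\cong H_n\left(EC_2\times_{C_2} \lvert Y_{\star}\rvert , k\right),\]
where the $C_2$-action on $\lvert Y_{\star}\rvert$ comes from the reflexive structure. With this in hand, the theorem reduces to showing that Goodwillie's weak equivalence $\lvert M\Sigma X(-)\rvert \rightarrow \mathcal{L}\Sigma X$ from \cite{Goodwillie} is $C_2$-equivariant for the reflexive operators, which is done by mimicking \cite[3.4.1]{Lodder1} with the dihedral crossed simplicial group replaced by the reflexive one; applying $EC_2\times_{C_2}(-)$ to this equivalence then finishes the proof. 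A smaller point: your worry about LEC hypotheses is misplaced here---the theorem assumes nothing of the sort, because the proof runs through Goodwillie's equivalence rather than Dunn's machinery (which is what required LEC in Theorem~\ref{moore-loop-free-loop-thm}), so no cofibrancy of $\Sigma X$ needs to be extracted.
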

\begin{proof}
By combining the constructions in \cite[Appendix B]{Lod} for singular homology with results on homotopy colimits for crossed simplicial groups \cite[Section 6]{FL} we see that given a reflexive space $Y_{\star}$, there are isomorphisms
\[HR_{n}\left(\mathcal{S}(Y_{\star})\right) \cong H_n\left(\mathrm{hocolim}_{\Delta R^{op}}\left(Y_{\star}\right)\right)\cong H_n\left(EC_2\times_{C_2} \llv Y_{\star}\rrv\right),\]
where the $C_2$-action on $\llv Y_{\star}\rrv$ is induced from the reflexive structure.

Goodwillie \cite[Section \RNum{5}]{Goodwillie} has constructed a weak equivalence $\llv M\Sigma X(-)\rrv \rightarrow \mathcal{L}\Sigma X$. By incorporating the reflexive action defined in \cite[Section 3.4]{Lodder1} we extend this to a $C_2$-equivariant weak equivalence. The proof of this fact follows by mimicking \cite[3.4.1]{Lodder1}, replacing the dihedral crossed simplicial group with the reflexive crossed simplicial group.

We therefore have a weak equivalence 
\[EC_2\times_{C_2} \llv M\Sigma X(-)\rrv \rightarrow EC_2 \times_{C_2}\mathcal{L}\Sigma X\] which yields
\[HR_{\ast}\left(\mathcal{S}\right)\cong H_{\ast}\left(EC_2\times_{C_2} \llv M\Sigma X(-)\rrv\right) \cong H_{\ast}\left( EC_2 \times_{C_2} \mathcal{L}\Sigma X,k\right) \]
as required.
\end{proof}

\section{Relationship to involutive Hochschild homology}
\label{ihh-sec}

Involutive Hochschild homology was introduced by Braun \cite{Braun} to study involutive algebras and involutive $A_{\infty}$-algebras. Fern\`andez-Val\`encia and Giansiracusa \cite{RFG} developed the homological algebra of involutive Hochschild homology. In particular, they show that  involutive Hochschild homology can be expressed as $\mathrm{Tor}$ over an involutive version of the enveloping algebra. The original motivation for constructing involutive Hochschild homology was to extend work of Costello \cite{Costello} to study the connection between unoriented two-dimensional topological conformal field theories and involutive Calabi-Yau $A_{\infty}$-algebras \cite{Braun}, \cite{FV}. Involutive Hochschild homology also arises in the study of the (co)homology of involutive dendriform algebras \cite{DasSaha}. We show that under certain conditions involutive Hochschild homology coincides with reflexive homology.

Let $k$ be a field. Let $A$ be an involutive $k$-algebra and let $A^e=A\otimes A^{op}$ denote the enveloping algebra. For this section only, let $C_2=\left\langle t\mid t^2=1\right\rangle$. The group $C_2$ acts on $A^e$ by the rule $t\left(a_1\otimes a_2\right)=\ol{a_2} \otimes \ol{a_1}$. Fern\`andez-Val\`encia and Giansiracusa define the \emph{involutive enveloping algebra} $A^{ie}$ to be $A^e\otimes k[C_2]$ with the product determined by
\[ \left(a_1\otimes t^i\right) \left(a_2\otimes t^j\right)= \left(a_1 \cdot t^i(a_2)\right)\otimes t^{i+j}.\] 
The \emph{involutive Hochschild homology of $A$ with coefficients in an involutive $A$-bimodule $M$}, denoted by $iHH_{\star}(A,M)$, is defined to be $\mathrm{Tor}_{\star}^{A^{ie}}\left(A,M\right)$ \cite[3.3.1]{RFG}.
Recall that an involutive vector space is projective if, when viewed as a $k[C_2]$-module, it is a direct summand of a free module.

\begin{thm}
Let $k$ be a field of characteristic zero. Let $A$ be a projective involutive $k$-algebra and let $M$ be an involutive $A$-bimodule. There exists an isomorphism of graded $k$-modules
\[HR_{\star}^{+}(A,M)\cong \mathrm{Tor}_{\star}^{A^{ie}}\left(A,M\right).\]
\end{thm}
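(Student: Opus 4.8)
The plan is to show that both sides are computed by the homology of the $C_2$-coinvariants of the Hochschild complex $C_\star(A,M)$, where $C_2=\langle r\rangle$ acts through the involution $r$ of Definition \ref{loday-defn}, namely $r(m\otimes a_1\otimes\cdots\otimes a_n)=\ol m\otimes\ol{a_n}\otimes\cdots\otimes\ol{a_1}$. For the reflexive side this is immediate from the hypotheses: since $k$ has characteristic zero, $2$ is invertible, so Proposition \ref{char-zero-prop} gives
\[HR_\star^{+}(A,M)\cong H_\star\!\left(\frac{C_\star(A,M)}{(1-r)}\right)=H_\star\bigl(C_\star(A,M)_{C_2}\bigr).\]
It then remains to identify $\mathrm{Tor}_\star^{A^{ie}}(A,M)$ with the same object.

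To handle the involutive Hochschild side I would resolve $A$ over $A^{ie}=A^e\otimes k[C_2]$ by combining the two resolutions already present in this paper. The bar resolution $\beta_\bullet\to A$ over $A^e$, with $\beta_p=A^{\otimes(p+2)}$, carries the $C_2$-action induced by the involution, which is precisely the action underlying $\mathcal{L}^{+}(A,M)$; thus $\beta_\bullet$ becomes a complex of $A^{ie}$-modules. Tensoring over $k$ with a free $k[C_2]$-resolution $W_\bullet\to k$, for instance the periodic resolution used in Definition \ref{bires-defn}, and passing to the total complex produces a resolution $\mathrm{Tot}(\beta_\bullet\otimes_k W_\bullet)\to A$ by $A^{ie}$-modules. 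Here the projectivity of $A$ as a $k[C_2]$-module is the hypothesis of Fern\`andez-Val\`encia and Giansiracusa under which the terms are $A^{ie}$-projective, so that this total complex genuinely computes $\mathrm{Tor}_\star^{A^{ie}}(A,M)$.

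Next I would apply $-\otimes_{A^{ie}}M$ to the resolution, which after the standard untwisting identifications returns the bicomplex $C_\star(A,M)\otimes_{k[C_2]}W_\bullet$; this is exactly the bicomplex of Proposition \ref{hyperhom-prop} computing the hyperhomology of $C_2$ with coefficients in the Hochschild complex $C_\star(A,M)$. Running its horizontal-homology spectral sequence, the $E^1$-page is $H_p\bigl(C_2,C_q(A,M)\bigr)$; since $2$ is invertible and $C_2$ is finite, the higher group homology vanishes, leaving the coinvariants $C_q(A,M)_{C_2}$ in the column $p=0$. Hence
\[\mathrm{Tor}_\star^{A^{ie}}(A,M)\cong H_\star\bigl(C_\star(A,M)_{C_2}\bigr),\]
matching the reflexive side; equivalently, Proposition \ref{hyperhom-prop} may be invoked directly to recognise this hyperhomology as $HR_\star^{+}(A,M)$.

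The hard part will be bookkeeping rather than conceptual. I expect the delicate points to be: verifying that the $C_2$-action the involution induces on the bar resolution agrees on the nose with the action defining $\mathcal{L}^{+}(A,M)$, including the correct placement of $a\mapsto\ol a$ and the absence of signs in the $+$ case; checking, under the projectivity assumption, that the terms $\beta_p\otimes_k W_q$ are indeed $A^{ie}$-projective; and matching the left/right $A^{ie}$-module conventions of Fern\`andez-Val\`encia and Giansiracusa so that $-\otimes_{A^{ie}}M$ reproduces the Hochschild differential together with the involution action. Once these identifications are in place, the collapse of the spectral sequence in characteristic zero is immediate and the theorem follows.
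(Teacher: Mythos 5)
Your proposal is correct, and its first half is exactly the paper's argument: Proposition \ref{char-zero-prop} identifies $HR_{\star}^{+}(A,M)$ with the homology of the coinvariant complex $C_{\star}(A,M)/(1-r)$. Where you genuinely diverge is on the involutive side. The paper's proof is a two-citation argument: it invokes \cite[3.3.2]{RFG}, which states that under precisely these hypotheses $\mathrm{Tor}_{\star}^{A^{ie}}(A,M)$ is also computed by the coinvariants of the Hochschild complex, and concludes by matching the two descriptions. You instead re-prove that input from scratch: you equip the bar resolution with its semilinear $C_2$-action to make it a complex of $A^{ie}$-modules, tensor with a free $k[C_2]$-resolution $W_{\bullet}$ of $k$, check that the total complex is an $A^{ie}$-projective resolution of $A$, and identify the result of applying $-\otimes_{A^{ie}}M$ with the bicomplex computing the hyperhomology of $C_2$ with coefficients in $C_{\star}(A,M)$. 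What the paper's route buys is brevity; what yours buys is self-containedness and, in fact, a stronger statement: if you finish via Proposition \ref{hyperhom-prop} rather than via the spectral-sequence collapse, you obtain $\mathrm{Tor}_{\star}^{A^{ie}}(A,M)\cong HR_{\star}^{+}(A,M)$ over any field, with neither the characteristic-zero hypothesis nor the coinvariant complex appearing at all (both are only needed to pass through Proposition \ref{char-zero-prop}).

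One attribution in your sketch is off, though it does not create a gap. The projectivity of $A$ as a $k[C_2]$-module is not what makes \emph{your} terms projective: when $W_q$ is $k[C_2]$-free, the untwisting isomorphism identifies $\beta_p\otimes_k W_q$ (diagonal action) with the induced module $A^{ie}\otimes_{A^e}\beta_p$, which is $A^{ie}$-projective simply because $\beta_p$ is $A^{e}$-free over a field. The projectivity hypothesis of Fern\`andez-Val\`encia and Giansiracusa is needed in \emph{their} argument, where the bar resolution alone, not tensored with $W_{\bullet}$, must consist of $A^{ie}$-projectives; over a field of characteristic zero it is in any case automatic, since $k[C_2]$ is then semisimple. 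So your construction needs less than you claim, which is a feature rather than a flaw, but the verification you should actually write down is the untwisting isomorphism, not an appeal to the hypothesis on $A$.
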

\begin{proof}
Proposition \ref{char-zero-prop} and \cite[3.3.2]{RFG} tell us that, under these conditions, both $HR_{\star}^{+}(A,M)$ and $iHH_{\star}(A,M)$ are isomorphic to the homology of the quotient of the Hochschild complex by the involution action, from which the result follows.  
\end{proof}

\section{Reflexive structure in real topological Hochschild homology}
\label{THR-section}

The structure of the reflexive crossed simplicial group arises implicitly in the literature on real topological Hochschild homology, although the category $\Delta R$ very rarely explicitly arises. In this expository section we explain how certain constructions in real topological Hochschild homology can be stated in terms of the reflexive crossed simplicial group.

We begin by recalling the definition of a real simplicial object from \cite[1.4.1]{dotto-thesis}, \cite[1.2]{DMPR}, \cite[1.1]{Hogenhaven2}, \cite[2.3]{AKGH}.

\begin{defn}
A \emph{real simplicial object} $X$ in a category $\mathbf{C}$ is a functor $X\colon \Delta^{op}\rightarrow \mathbf{C}$ together with maps $\omega_n\colon X([n])\rightarrow X([n])$ such that $\omega_n^2=id_{X([n])}$ and
\begin{itemize}
\item $d_i\circ \omega_n = \omega_{n-1}d_{n-i}$,
\item $s_i\circ \omega_n = \omega_{n+1}s_{n-i}$.
\end{itemize}
\end{defn}

\begin{rem}
By comparing with Definition \ref{delta-r-defn} we see that a real simplical object $X$ in a category $\mathbf{C}$ is precisely a functor $X\colon \Delta R^{op} \rightarrow \mathbf{C}$. In other words, it is the same thing as a reflexive object in $\mathbf{C}$. We note that the category $\Delta R$ does appear in \cite[1.2]{DMPR} with the notation $\Delta_R^{op}$.
\end{rem}

\begin{eg}
If we take $\mathbf{C}=\mathbf{Set}$, we obtain the category of \emph{reflexive sets} or \emph{real simplicial sets}. As noted in the introduction, Spali\'nski \cite[Section 3]{spal-discrete} has shown that the category of reflexive sets admits a model structure that is Quillen equivalent to the category of $C_2$-spaces with the fixed-point model structure. Furthermore, reflexive sets play an important role in the homotopy theory of dihedral sets (see \cite{spalinski2} and \cite{spal-discrete}).

It is worth remarking that, using the crossed simplical group structure, we can use a result of Fiedorowicz and Loday \cite[6.13]{FL} to calculate the homology of $C_2$-equivariant Borel constructions. Let $X$ be a reflexive set and let $k[X]$ be the composition with the free $k$-module functor. There is an isomorphism of graded $k$-modules $HR_{\star}^{+}(k[X]) \cong H_{\star}\left(EC_2 \times_{C_2} \llv X\rrv , k\right)$. 
\end{eg}

\begin{eg}
Let $\mathbf{C}=\mathbf{Sp}^{O}$, the category of orthogonal ring spectra as introduced in \cite{MMSS}. Let $(A,w)$ be an orthogonal ring spectrum with anti-involution in the sense of \cite[Definition 2.1]{DMPR} and let $(M,j)$ be an $A$-bimodule in the sense of Definition 2.5 of the same paper. The \emph{dihedral nerve} $N_{\wedge}^{di}(A,M)$ of \cite[Definition 2.9]{DMPR} is a functor $\Delta R^{op} \rightarrow\mathbf{Sp}^{O}$. In other words, it is a reflexive object in the category of orthogonal spectra or a real simplicial orthogonal spectrum. We note that this is called \emph{dihedral} because, in the case where we take $M=A$, there is also an action of the cyclic groups, giving the structure of a functor $\Delta D^{op}\rightarrow\mathbf{Sp}^{O}$.

The dihedral nerve plays an important role in the construction of $\mathrm{THR}_{\bullet}(A;M)$ (see \cite[Definition 2.18]{DMPR}) whose geometric realization is the \emph{real topological Hochschild homology of $A$ with coefficients in $M$}. We note that $\mathrm{THR}_{\bullet}(A;M)$ is itself a real simplicial orthogonal spectrum and therefore can be considered as a functor $\Delta R^{op}\rightarrow \mathbf{Sp}^{O}$.
\end{eg}

\begin{rem}
Our reflexive homology theory, as defined in Section \ref{ref-hom-sec}, takes as input a $k$-algebra with involution and gives a graded $k$-module as output. In recent work, Angelini-Knoll, Gerhardt and Hill \cite{AKGH} have introduced another theory for rings with involution, called \emph{real Hochschild homology}. This theory takes as input the Mackey functor associated to a ring with involution (see \cite[6.12]{AKGH}) and gives graded equivariant Mackey functor as output (see \cite[6.15]{AKGH}). They also prove that their theory is related to real topological Hochschild homology via a linearization map (\cite[6.20]{AKGH}).
\end{rem}

\end{document}